\newcounter{cnt1}
\newcounter{cnt2}
\newcounter{cnt3}
\newcommand{\blr}{\begin{list}{$($\roman{cnt1}$)$} {\usecounter{cnt1}
        \setlength{\topsep}{0pt} \setlength{\itemsep}{0pt}}}
\newcommand{\bla}{\begin{list}{$($\alph{cnt2}$)$} {\usecounter{cnt2}
       \setlength{\topsep}{0pt} \setlength{\itemsep}{0pt}}}
\newcommand{\bln}{\begin{list}{$($\arabic{cnt3}$)$} {\usecounter{cnt3}
                \setlength{\topsep}{0pt} \setlength{\itemsep}{0pt}}}
\newcommand{\el}{\end{list}}
\newtheorem{thm}{Theorem}[section]
\newtheorem{prop}[thm]{Proposition}
\newtheorem{defn}[thm]{Definition}
\newtheorem{Exm}[thm]{Example}
\newtheorem{rem}[thm]{Remark}
\newtheorem{cor}[thm]{Corollary}
\newcommand{\ilim}{\mathop{\varprojlim}\limits}
\begin{document}
\title {An Example of Constructing Versal Deformation for Leibniz Algebras}
\author{Ashis Mandal
}
\maketitle
\date{}
\noindent
\begin{abstract}
  In this work we compute a versal deformation of the three dimensional nilpotent Leibniz algebra over $\mathbb{C}$, defined by the nontrivial brackets $[e_1,e_3]=e_2$ and $[e_3,e_3]=e_1$.  
\end{abstract}
{\bf Keywords:} Leibniz algebra, Leibniz cohomology, infinitesimal deformation, versal deformation, obstruction.\\
 {\bf Mathematics Subject Classifications (2000):} $13$D$10$, $14$D$15$, $13$D$03$.
\section{Introduction}
Leibniz algebras are a generalized version of Lie algebras, without the antisymmetry property. They were introduced by J.-L. Loday in 1993 and they turned out to be useful both in mathematics and physics. In \cite{FMM} the authors develop the versal deformation theory for Leibniz algebras. The existence of a versal deformation under certain cohomology condition follows from a general theorem of Schlessinger \cite{Sch}. The construction of a versal deformation is essential to solve the basic deformation question, as it is a deformation which induces all nonequivalent deformations of a given Leibniz algebra. 

In this paper we give an explicit example on which we demonstrate the general construction and computations. For this, after recalling some definitions and results in Section \ref{lcohomology}, we describe and prove the relationship between Massey brackets and obstructions for Leibniz algebra deformations in Section \ref{Massey Brackets and Obstructions}.

Our example is the following.  
Consider a three dimensional vector space $L$ spanned by $\{e_1,~e_2,~e_3\}$ over $\mathbb{C}$. Define a bilinear map $[~,~]: L\times L \longrightarrow L$ by $[e_1,e_3]=e_2$ and $[e_3,e_3]= e_1$, all other products of basis elements being $0$. Then $(L,[~,~])$ is a Leibniz algebra over $\mathbb{C}$ of dimension $3$. The Leibniz algebra $L$ is  nilpotent and is denoted by $\lambda_6$  in the classification of three dimensional nilpotent Leibniz algebras, see \cite{A3}. We compute cohomologies necessary for our purpose, Massey brackets and construct a versal deformation of our example in Section \ref{computation}. 
\section{Leibniz Algebra, Cohomology and Deformations} \label{lcohomology}
Leibniz algebras were introduced by J.L.-Loday \cite{L1,L3} and their cohomology was defined in \cite{LP,L2}. Let us recall some basic definitions. Let $\mathbb{K}$ be a field.
\begin{defn}
A Leibniz algebra is a $\mathbb{K}$-module $L$, equipped with a bracket operation that satisfies the Leibniz identity: 
$$[x,[y,z]]= [[x,y],z]-[[x,z],y],~~\mbox{for}~x,~y,~z \in L.$$ 
\end{defn}

Any Lie algebra is automatically a Leibniz algebra, as in the presence of antisymmetry, the Jacobi identity is equivalent  to the Leibniz identity. More  examples of Leibniz algebras were given in \cite {L1,LP}, and recently for instance in \cite{A3,A1, A2}.

Let $L$ be a Leibniz algebra and $M$ a representation of $L$. By definition, $M$ is a $\mathbb{K}$-module equipped with two actions (left and right) of $L$,
$$[-,-]:L\times M\longrightarrow M~~\mbox{and}~[-,-]:M \times L \longrightarrow M ~~\mbox{such that}~$$ 
$$[x,[y,z]]=[[x,y],z]-[[x,z],y]$$
holds, whenever one of the variables is from $M$ and the two others from $L$.

Define $CL^n({L}; {M}):= \mbox{Hom} _\mathbb{K}({L}^{\otimes n}, {M}), ~n\geq 0.$ Let 
$$\delta^n : CL^n({L}; {M})\longrightarrow CL^{n+1}(L; M)$$ 
be a $\mathbb{K}$-homomorphism defined by 
\begin{equation*}
\begin{split}
&\delta^nf(x_1,\cdots,x_{n+1})\\
&:= [x_1,f(x_2,\cdots,x_{n+1})] + \sum_{i=2}^{n+1}(-1)^i[f(x_1,\cdots,\hat{x}_i,\cdots,x_{n+1}),x_i]\\
&+ \sum_{1\leq i<j\leq n+1}(-1)^{j+1}f(x_1,\cdots,x_{i-1},[x_i,x_j],x_{i+1},\cdots,\hat{x}_j,\cdots, x_{n+1}).
\end{split}
\end{equation*}
Then $(CL^*(L; M),\delta)$ is a cochain complex, whose cohomology is called the cohomology of the Leibniz algebra $L$ with coefficients in the representation $M$. The $n$ th cohomology is denoted by $HL^n(L; M)$. In particular, $L$ is a representation of itself with the obvious action given by the bracket in $L$. The  $n$ th cohomology of $L$ with coefficients in itself is denoted by $HL^n(L; L).$
Let $S_n$ be the symmetric group of $n$ symbols. Recall that a permutation $\sigma \in S_{p+q}$ is called a $(p,q)$-shuffle, if $\sigma(1)<\sigma(2)<\cdots<\sigma(p)$, and $\sigma(p+1)<\sigma(p+2)<\cdots<\sigma(p+q)$. We denote the set of all $(p,q)$-shuffles in $S_{p+q}$ by $Sh(p,q)$.

For $\alpha \in CL^{p+1}(L;L)$ and $\beta \in CL^{q+1}(L;L)$, define $\alpha \circ \beta \in CL^{p+q+1}(L;L)$ by
\begin{equation*}
\begin{split}
&\alpha \circ \beta (x_1,\ldots,x_{p+q+1} )\\
=&~\sum_{k=1}^{p+1}(-1)^{q(k-1)}\{\sum_{\sigma \in Sh(q,p-k+1)}sgn(\sigma)\alpha(x_1,\ldots,x_{k-1},\beta(x_k,x_{\sigma(k+1)},\ldots,x_{\sigma(k+q)}),\\
&~~~~~~~~~~~~~~~~~~~~~~~~~~~~~~~~~~~~~~~~~~~~~~~~~~~~~x_{\sigma(k+q+1)},\ldots,x_{\sigma(p+q+1)}) \}.
\end{split}
\end{equation*}

The graded cochain module 
$CL^{*}(L;L)=\bigoplus_{p} CL^p(L;L)$ equipped with the bracket $\nu$ as defined by  
$$[\alpha,\beta]=\alpha \circ \beta + (-1)^{pq+1} \beta \circ \alpha
~~\mbox{for}~ \alpha \in CL^{p+1}(L;L)~~\mbox{and}~\beta \in CL^{q+1}(L;L)$$
 and the differential map $d$  by $d \alpha =(-1)^{|\alpha|}\delta \alpha~\mbox{for}~\alpha \in CL^{*}(L;L) $ is a differential graded Lie algebra \cite{B}. 

Let now $\mathbb{K}$ a field of zero characteristic and the tensor product over $\mathbb{K}$ will be denoted by $\otimes$. We recall the notion of deformation of a Leibniz algebra $L$ over a local algebra base $A$ with a fixed augmentation $\varepsilon:{A}\rightarrow
\mathbb{K}$ and maximal ideal $\mathfrak{M}$. Assume $dim(\mathfrak{M}^k/\mathfrak{M}^{k+1})<\infty$ for every $k$ (see \cite{FMM}).
\begin{defn}
A deformation $\lambda$ of ${L}$ with base
$({A},\mathfrak{M})$, or simply with base ${A}$ is an ${A}$-Leibniz
algebra structure on the tensor product
${A}\otimes {L}$ with the bracket $[,]_\lambda$ such that
 \[
 \varepsilon\otimes id:{A}\otimes {L}\rightarrow \mathbb{K}\otimes {L}
 \]
 is a ${A}$-Leibniz algebra  homomorphism (where the $A$-Leibniz algebra structure on $\mathbb{K}\otimes {L}$ is given via $\varepsilon$).
\end{defn}
A deformation of the Leibniz algebra $L$ with base $A$ is called {\it infinitesimal}, {or \it first order}, if in addition to this $\mathfrak{M}^2=0$. We call a deformation of {\it order k}, if $\mathfrak{M}^{k+1}=0$.

Suppose $A$ is a complete local algebra ( $A=\ilim_{n\rightarrow
\infty}({A}/{\mathfrak{M}^n})$), where $\mathfrak{M}$ is the maximal
ideal in $A$. Then a deformation of $L$ with base $A$ which is obtained as the projective limit of deformations of $L$ with base $A/\mathfrak{M}^{n}$ is called a {\it formal deformation} of $L$.

Observe that for $l_1,l_2 \in L$ and $a,b \in A$ we have $$[a\otimes l_1,b\otimes l_2]_\lambda = ab[1\otimes l_1,1\otimes l_2]_\lambda$$ by $A$- linearity of $[,]_\lambda$.
Thus to define  a deformation  $\lambda$ it is enough to specify the brackets $[1\otimes l_1,1\otimes l_2]_\lambda$ for $l_1,l_2 \in L$.
Moreover, since $\varepsilon\otimes id:{A}\otimes {L}\rightarrow \mathbb{K}\otimes {L}$
 is a ${A}$-Leibniz algebra  homomorphism, 
 $$(\varepsilon\otimes id)[1\otimes l_1,1\otimes l_2]_\lambda =[l_1,l_2]=(\varepsilon\otimes id)(1\otimes[l_1,l_2])$$
which implies $$ [1\otimes l_1,1\otimes l_2]_\lambda -1\otimes[l_1,l_2] \in ker (\varepsilon \otimes id).$$
Hence we can write 
$$[1\otimes l_1,1\otimes l_2]_\lambda =1\otimes[l_1,l_2]+\sum_{j} c_j \otimes y_j ,$$ where $\sum_{j} c_j \otimes y_j$ is a finite sum with $c_j \in ker(\varepsilon)=\mathfrak{M}$ and $y_j \in L$.

\begin{defn}
Suppose $\lambda_1$ and $\lambda_2$ are two deformations of a Leibniz algebra $L$ with finite dimensional local algebra base $A$. We call them  equivalent if  there exists a Leibniz algebra isomorphism $$ \phi:(A\otimes L,[,]_{\lambda_1})\rightarrow (A\otimes L,[,]_{\lambda_2})$$ such that $(\varepsilon\otimes id)\circ \phi=\varepsilon\otimes id$.
\end{defn}
The definition naturally generalizes to deformations  complete local algebra base.
We write $\lambda_1 \cong \lambda_2$ if $\lambda_1$ is equivalent to $\lambda_2$. 
\begin{Exm}
If $A= \mathbb{K}[[t]]$ then a formal deformation of a Leibniz algebra $L$ over $A$ is precisely a formal $1$-parameter deformation of $L$(see \cite{B}).
\end{Exm}
\begin{defn}
Suppose $\lambda$ is a given deformation of $L$ with base $(A,\mathfrak{M})$ and augmentation $\varepsilon:{A}\rightarrow \mathbb{K}$, where $A$ is a finite dimensional local algebra. Let $A^\prime$ be another commutative local algebra with identity and augmentation  $\varepsilon^{\prime}:{A^\prime}\rightarrow \mathbb{K}$. Suppose $\phi:A \rightarrow A^{\prime} $ is an algebra homomorphism with $\phi(1)=1$ and $\varepsilon^{\prime} \circ \phi =\varepsilon$. Let $ker(\varepsilon^{\prime})= \mathfrak{M}^\prime$. Then the push-out $\bf{\phi_{*} \lambda}$ is the deformation of $L$ with base $(A^\prime,\mathfrak{M}^\prime)$ and bracket 
          $$[{a_1 }^\prime \otimes_A (a_1\otimes {l_1}),a_2 ^\prime
\otimes_A(a_2\otimes l_2) ]_{\phi_* \lambda}=a_1 ^\prime a_2 ^\prime
\otimes_A[a_1\otimes l_1,a_2\otimes l_2]_\lambda $$
 where $a_1^\prime,a_2 ^\prime \in {A}^\prime,~ a_1,a_2 \in A$ and $l_1,l_2 \in
L$. Here $A^\prime$ is considered as an $A$-module by the map $a^\prime \cdot a=a^\prime \phi(a)$ so that $$A^\prime \otimes L=(A^\prime {\otimes}_{A} A)\otimes L =A^\prime {\otimes}_{A}(A \otimes L).$$
\end{defn}
The same definition holds for complete algebra base by taking projective limit.

\begin{rem}\label{push-out}
If the bracket $[,]_\lambda$ is given by 
\begin{equation*}%\label{exp for bracket}
[1\otimes l_1,1\otimes l_2]_\lambda =1\otimes[l_1,l_2]+\sum_{j} c_j \otimes y_j~\mbox{for}~ c_j \in \mathfrak{M}~\mbox{and}~y_j \in L
\end{equation*} 
then the bracket $[,]_{\phi_* \lambda}$ can be written as
\begin{equation*}%\label{exp for push-out}
[1\otimes l_1,1\otimes l_2]_ {\phi_* \lambda}=1\otimes [l_1,l_2]+\sum_{j}\phi(c_j) \otimes y_j.
\end{equation*}
\end{rem}
Let us recall the construction of a specific infinitesimal deformation of a Leibniz algebra $L$, which is universal in the class of all infinitesimal deformations from \cite{FMM}.
Assume that $dim (HL^2(L;L)) < \infty$. Denote the space $HL^2(L;L)$ by $\mathbb{H}$. Consider the algebra $C_1=\mathbb{K}\oplus \mathbb{H}^\prime$  where $\mathbb{H}^\prime$ is the dual of $\mathbb{H}$~, by setting $$(k_1,h_1)\cdot(k_2,h_2)=(k_1 k_2,k_1 h_2+k_2 h_1)~\mbox{for}~(k_1,h_1), (k_2,h_2)\in C_1.$$   Observe that the second summand is an ideal of $C_1$ with zero multiplication. Fix a homomorphism $$\mu: \mathbb{H} \longrightarrow
CL^2(L;L)=Hom(L^{\otimes 2};L) $$
which takes a cohomology class into a cocycle representing it. Notice that there is an isomorphism $\mathbb{H}^\prime \otimes L \cong Hom(\mathbb{H}~;L)$, 
so we have
$$C_1 \otimes L 
 =(\mathbb{K} \oplus \mathbb{H}^\prime)\otimes L 
 \cong (\mathbb{K}\otimes L) \oplus (\mathbb{H}^\prime \otimes L) 
 \cong L \oplus Hom(\mathbb{H}~;L).$$
Using the above identification, define a Leibniz bracket on $C_1 \otimes L$ as follows.
For $(l_1,\phi_1),(l_2,\phi_2) \in L \oplus Hom(\mathbb{H}~;L)$ let
$$[(l_1,\phi_1),(l_2,\phi_2)]=([l_1,l_2],\psi)$$ where the map  $\psi:\mathbb{H} \longrightarrow L$ is given by 
$$\psi(\alpha)=\mu(\alpha)(l_1,l_2)+[\phi_{1}(\alpha),l_2]+[l_1,\phi_2(\alpha)]~\mbox{for}~\alpha \in \mathbb{H}~.$$ 
It is straightforward to check that $C_1\otimes L$ along with the above bracket is a Leibniz algebra over $C_1$. The Leibniz identity is a consequence of the fact that $\delta \mu(\alpha)=0~ \mbox{for}~ \alpha \in \mathbb{H}$~. Thus $\eta_1$ is an infinitesimal deformation of $L$ with base $C_1=\mathbb{K}\oplus \mathbb{H}^\prime $. It is proved in \cite{FMM}:
\begin{prop}\label{up toisomorphism}
Up to an isomorphism, the deformation $\eta_1$ does not depend on
the choice of $\mu$. 
\end{prop}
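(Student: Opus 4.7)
The plan is to show that two choices of the section $\mu$ differ by a coboundary at the cochain level, and to convert that coboundary into an explicit $C_1$-linear Leibniz algebra isomorphism between the two resulting deformations that commutes with the augmentation. Concretely, fix two sections $\mu, \mu' : \mathbb{H} \to CL^2(L;L)$. For every $\alpha \in \mathbb{H}$ the cochains $\mu(\alpha)$ and $\mu'(\alpha)$ represent the same cohomology class, so $\mu(\alpha) - \mu'(\alpha) = \delta^1 \tau(\alpha)$ for some $\tau(\alpha) \in CL^1(L;L) = \mathrm{Hom}(L,L)$. Using a basis of $\mathbb{H}$ (which is finite-dimensional by hypothesis), the primitives can be chosen so that the resulting map $\tau : \mathbb{H} \to CL^1(L;L)$ is $\mathbb{K}$-linear.

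Under the identification $C_1 \otimes L \cong L \oplus \mathrm{Hom}(\mathbb{H},L)$, define $T: L \to \mathrm{Hom}(\mathbb{H},L)$ by $T(l)(\alpha) = \tau(\alpha)(l)$, and set
$$\phi(l, \phi_1) = (l, \phi_1 + T(l)).$$
This is a $\mathbb{K}$-linear automorphism of $L \oplus \mathrm{Hom}(\mathbb{H},L)$; extended $C_1$-linearly it is an automorphism of $C_1\otimes L$, and since it preserves the first coordinate one has $(\varepsilon \otimes \mathrm{id}) \circ \phi = \varepsilon \otimes \mathrm{id}$ automatically. Its inverse is $\phi^{-1}(l,\phi_1) = (l, \phi_1 - T(l))$.

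The main computation is to verify that $\phi$ carries the deformation $\eta_1$ built from $\mu$ to the one built from $\mu'$ as Leibniz algebras. Expanding both brackets on $L \oplus \mathrm{Hom}(\mathbb{H},L)$, the $L$-components of $\phi([(l_1,\phi_1),(l_2,\phi_2)]_\mu)$ and $[\phi(l_1,\phi_1),\phi(l_2,\phi_2)]_{\mu'}$ trivially agree (both equal $[l_1,l_2]$), and equality of the $\mathrm{Hom}(\mathbb{H},L)$-components reduces, for each $\alpha \in \mathbb{H}$, to the identity
$$\mu(\alpha)(l_1,l_2) - \mu'(\alpha)(l_1,l_2) = [\tau(\alpha)(l_1),l_2] + [l_1,\tau(\alpha)(l_2)] - \tau(\alpha)([l_1,l_2]),$$
which is precisely $\delta^1\tau(\alpha)(l_1,l_2)$ and therefore holds by the choice of $\tau$.

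The only mildly delicate point is the first step, namely producing primitives $\tau(\alpha)$ depending linearly on $\alpha$; once that is secured, the verification that $\phi$ intertwines the brackets is a direct calculation that uses only the explicit formula for $\delta^1$ and the defining bracket of $\eta_1$.
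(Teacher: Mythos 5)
Your proposal is correct: the identity you reduce to is exactly the paper's formula for $\delta^1\tau(\alpha)(l_1,l_2)=[l_1,\tau(\alpha)(l_2)]+[\tau(\alpha)(l_1),l_2]-\tau(\alpha)([l_1,l_2])$, the linearity of $\tau$ is secured by choosing primitives on a basis of the finite-dimensional space $\mathbb{H}$, and $\phi$ is $C_1$-linear because $\mathbb{H}'$ acts with zero multiplication. The paper itself does not reproduce a proof (it defers to \cite{FMM}), and your argument is the standard one given there, so no further comparison is needed.
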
 
  
\begin{rem}\label{exp of inf}
Suppose $\{h_i \}_{1\leq i \leq n}$ is a basis of $\mathbb{H}$ and $\{g_i\}_{1\leq i \leq n}$ is the dual basis. Let $\mu(h_i)=\mu_i \in CL^2(L;L)$. Under the identification $C_1 \otimes L = L \oplus Hom(\mathbb{H}~;L)$, an element $(l,\phi)\in L \oplus Hom(\mathbb{H}~;L)$ corresponds to $1\otimes l +\sum_{i=1}^{n}{g_i\otimes \phi(h_i)}$. 
Then for $(l_1,\phi_1),(l_2,\phi_2) \in L \oplus Hom(\mathbb{H};L)$ their bracket $([l_1,l_2],\psi)$ 
corresponds to 
$$1\otimes [l_1,l_2]+ \sum_{i=1}^{n} g_i\otimes (\mu_i(l_1,l_2)+[\phi_1(h_i),l_2]+[l_1,\phi_2(h_i)]).$$
In particular, for $l_1,l_2 \in L$ we have 
$$[1\otimes l_1,1\otimes l_2]_{\eta_1}=1\otimes [l_1,l_2]+\sum_{i=1}^{n}g_i \otimes \mu_i(l_1,l_2).$$ 
\end{rem}
The main property of $\eta_{1}$ is the universality in the class of infinitesimal deformations with a finite dimensional base. 
 
\begin{prop}\label{couniversal}
For any infinitesimal deformation $\lambda$ of a Leibniz algebra
$L$ with a finite dimensional  base $A$ there exists a unique
homomorphism $\phi:C_1=({\mathbb{K}}\oplus
\mathbb{H}^\prime)\longrightarrow A$ such that $\lambda$ is
equivalent to the push-out $\phi_{*}\eta_1$.
\end{prop}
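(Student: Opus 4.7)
The plan is to read off from $\lambda$ a canonical cohomology-class map $\mathfrak{M}^\prime \to \mathbb{H}$, dualize it (possible since $\dim A < \infty$), and take this as the restriction of $\phi$ to $\mathbb{H}^\prime$. Fix a basis $\{c_k\}$ of $\mathfrak{M}$ and write
$$[1 \otimes l_1, 1 \otimes l_2]_\lambda = 1 \otimes [l_1, l_2] + \sum_k c_k \otimes \psi_k(l_1, l_2), \qquad \psi_k \in CL^2(L;L).$$
The Leibniz identity for $[,]_\lambda$, read modulo $\mathfrak{M}^2 = 0$, reduces in each $c_k$-component to $\delta \psi_k = 0$. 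Hence the classes $[\psi_k] \in \mathbb{H}$ are well defined and assemble into a linear map $\bar\psi : \mathfrak{M}^\prime \to \mathbb{H}$ sending the dual basis vector $c_k^\ast$ to $[\psi_k]$. Its dual $\bar\psi^\ast: \mathbb{H}^\prime \to \mathfrak{M}$ will be the nontrivial part of $\phi$.

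Define $\phi : C_1 = \mathbb{K} \oplus \mathbb{H}^\prime \to A$ by $\phi(k, h) = k \cdot 1_A + \bar\psi^\ast(h)$ for $h \in \mathbb{H}^\prime$. Because $\mathbb{H}^\prime \cdot \mathbb{H}^\prime = 0$ in $C_1$ and $\mathfrak{M}^2 = 0$ in $A$, this is an algebra map; it sends $1$ to $1$, and $\varepsilon \circ \phi = \varepsilon_{C_1}$ because $\bar\psi^\ast$ lands in $\mathfrak{M}$.

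To produce the equivalence $\lambda \cong \phi_*\eta_1$, let $\{h_i\}$ be a basis of $\mathbb{H}$ with dual basis $\{g_i\}$ of $\mathbb{H}^\prime$, and put $\mu_i = \mu(h_i)$. By Remarks \ref{push-out} and \ref{exp of inf},
$$[1\otimes l_1, 1\otimes l_2]_{\phi_*\eta_1} = 1\otimes [l_1,l_2] + \sum_i \phi(g_i) \otimes \mu_i(l_1,l_2).$$
Writing $[\psi_k] = \sum_i a_{ki} h_i$ in $\mathbb{H}$, so that $\phi(g_i) = \sum_k a_{ki} c_k$, and choosing $\tau_k \in CL^1(L;L)$ with $\psi_k - \sum_i a_{ki}\mu_i = \delta \tau_k$, one finds that the two brackets differ by exactly $\sum_k c_k \otimes \delta\tau_k(l_1,l_2)$. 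The $A$-linear map defined by $\Phi(1\otimes l) = 1\otimes l + \sum_k c_k \otimes \tau_k(l)$ is then an $A$-Leibniz isomorphism $\phi_*\eta_1 \to \lambda$ over the identity on $L$; the verification uses $\mathfrak{M}^2 = 0$ to kill all $c_k c_{k^\prime}$ cross terms, after which only the coboundary identity $\delta\tau_k(l_1,l_2) = [l_1,\tau_k(l_2)] + [\tau_k(l_1),l_2] - \tau_k([l_1,l_2])$ is needed.

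For uniqueness, observe that for any homomorphism $\phi: C_1 \to A$, the bracket of $\phi_*\eta_1$ displayed above shows directly that its associated cohomology-class map is the transpose of $\phi|_{\mathbb{H}^\prime}$. Since equivalent deformations induce the same $\bar\psi$, any $\phi$ satisfying the conclusion must have $\phi|_{\mathbb{H}^\prime} = \bar\psi_\lambda^\ast$, and the normalization $\phi(1) = 1$ then pins down $\phi$ completely. The main technical step is verifying that $\Phi$ really is a Leibniz algebra isomorphism; everything else is linear-algebraic bookkeeping in the finite-dimensional augmented algebra $A$.
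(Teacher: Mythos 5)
The paper does not actually prove this proposition --- it is quoted from \cite{FMM} --- but your argument is exactly the standard one used there (and by Fialowski--Fuchs in the Lie case): read off the cocycles $\psi_k$ from the $\mathfrak{M}$-components of $[\,,\,]_\lambda$, note that $\mathfrak{M}^2=0$ forces $\delta\psi_k=0$, dualize the induced map $\mathfrak{M}'\to\mathbb{H}$ to define $\phi|_{\mathbb{H}'}$, and absorb the coboundary discrepancy into an automorphism $id+\sum_k c_k\otimes\tau_k$. Two small points to tidy: with the paper's convention $\delta\tau(l_1,l_2)=[l_1,\tau(l_2)]+[\tau(l_1),l_2]-\tau([l_1,l_2])$ your normalization $\psi_k-\sum_i a_{ki}\mu_i=\delta\tau_k$ forces $\Phi$ to go from $\lambda$ to $\phi_*\eta_1$ rather than the direction you state (or replace $\tau_k$ by $-\tau_k$), and the uniqueness step silently uses that equivalent infinitesimal deformations determine the same class map $\mathfrak{M}'\to\mathbb{H}$, which deserves the one-line check that any equivalence over $\varepsilon\otimes id$ has the form $id+\sum_k c_k\otimes\sigma_k$ and shifts each $\psi_k$ by the coboundary $\delta\sigma_k$.
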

Suppose $A$ is a local algebra with the unique maximal ideal $\mathfrak{M}$ and $\pi:A\rightarrow A/{\mathfrak{M}^{2}}$ the corresponding quotient map. The algebra $A/{\mathfrak{M}^{2}}$ is obviously local with maximal ideal ${\mathfrak{M}}/{\mathfrak{M}^{2}}$ and $({\mathfrak{M}}/{\mathfrak{M}^{2}})^{2}=0$. If $\lambda$ is a deformation of $L$ with base $A$ then $\pi_{*} \lambda$ is a deformation with base $A/{\mathfrak{M}^{2}}$ and it is clearly infinitesimal. Therefore, by the previous proposition, we have a map 
$$a_{\pi *\lambda}:({\mathfrak{M}}/{\mathfrak{M}^{2}})^\prime \rightarrow \mathbb{H}~. $$
\begin{defn}

The dual space $({\mathfrak{M}}/{\mathfrak{M}^{2}})^\prime$  is called the tangent space of A and is denoted by $TA$. The map $a_{\pi *\lambda}$ is called the differential of $\lambda$ and is denoted by $d{\lambda}$.
\end{defn}
It follows from Proposition \ref{couniversal} that equivalent deformations have the same differential (see \cite{FMM}). 
\begin{defn}
Let $C$ be a complete local algebra. A formal deformation $\eta$ of a Leibniz algebra $L$ with base $C$ is called versal, if\\
(i)~for any formal deformation $\lambda$ of $L$ with  base $A$ there exists a homomorphism $f:C \rightarrow A$ such that the deformation $\lambda$ is equivalent to $f_{*}\eta$; \\
(ii)~if $A$ satisfies the condition ${\mathfrak{M}}^2=0$, then $f$ is unique. 
\end{defn}  

In \cite{FMM} a construction for a versal deformation of a Leibniz algebra was given. The construction involves realizing obstructions to extend a deformation  with base $A$ to a deformation  with base $B$ for a given extension 
$$0\longrightarrow
{M}\stackrel{i}{\longrightarrow} B
\stackrel{p}{\longrightarrow}A\longrightarrow 0.$$
Suppose a deformation $\lambda$ of $L$ is given with base $A$. If we try to extend it to a deformation with base $B$, it gives rise to a cohomology class in 
$$ HL^3(L;M\otimes L)=M\otimes HL^3(L;L).$$
The above assignment yields the {\it obstruction map} for this extension 
$$ \theta_{\lambda}:H_{Harr}^2(A;M) \longrightarrow M \otimes HL^3(L;L),~(\mbox{see}~\cite{FMM}).~$$
(Here $H_{Harr}^2(C_k;\mathbb{K})$ denotes the two dimensional Harrison cohomology space.)

Let us recall the main steps of the construction.
Consider the Leibniz algebra $L$ with $dim(\mathbb{H})<\infty$ and the extension
$$0\longrightarrow
 \mathbb{H}^\prime \stackrel{i}{\longrightarrow} C_1
\stackrel{p}{\longrightarrow} C_0 \longrightarrow 0,$$ 
where  $C_0=\mathbb{K}$ and 
$C_1=\mathbb{K}\oplus \mathbb{H}^\prime$ as before.
Let $\eta_1$ be the universal infinitesimal deformation with base $C_1$.
We  proceed by induction.  Suppose for some $k\geq 1$ we have constructed a finite dimensional local algebra $C_k$ and a  deformation $\eta_k$ of $L$ with base $C_k$.   
Let 
$$\mu:H_{Harr}^2(C_k;\mathbb{K})\longrightarrow (Ch_{2} (C_k))^\prime$$
be a homomorphism sending a cohomology class to a cocycle representing the class.  Let
$$f_{C_k}:Ch_{2} (C_k) \longrightarrow H_{Harr}^2(C_k;\mathbb{K})^\prime$$
 be the dual of $\mu$. Then we have the following extension of $C_k$:  
\begin{equation}\label{universal extension}
0\longrightarrow
 H_{Harr}^2(C_k;\mathbb{K})^\prime \stackrel{\bar {i}_{k+1}}{\longrightarrow} {\bar C}_{k+1}
\stackrel{\bar{p}_{k+1}}{\longrightarrow} C_k \longrightarrow 0.
\end{equation}
The corresponding {\it obstruction} $\theta_{\eta_{k}}([f_{C_k}]) \in H_{Harr}^2(C_k;\mathbb{K})^\prime \otimes HL^3(L;L)$ gives a linear map   
$\omega_k:H_{Harr}^2(C_k;\mathbb{K}) \longrightarrow HL^3(L;L)$
with the dual map  
$${\omega_k}^\prime:HL^3(L;L)^\prime \longrightarrow H_{Harr}^2(C_k;\mathbb{K})^\prime .$$
We have an induced extension 
$$ 0\longrightarrow coker (\omega'_{k})\longrightarrow \bar{C}_{k+1}/\bar{i}_{k+1}\circ \omega'_{k}(HL^3(L;L)')\longrightarrow C_k \longrightarrow 0.$$
Since $coker (\omega'_k)\cong (ker (\omega_k))^\prime$,
it yields an extension 
\begin{equation}\label{yields an extension}
0\longrightarrow (ker(\omega_k))^\prime \stackrel{i_{k+1}}{\longrightarrow} C_{k+1}
\stackrel{p_{k+1}}{\longrightarrow} C_k \longrightarrow 0
\end{equation}
where $C_{k+1}= { \bar{C}_{k+1}}/{\bar{i}_{k+1}\circ~ \omega_{k}^\prime (HL^3(L;L)')}$ and  $i_{k+1}$, $p_{k+1}$ are the mappings induced by $\bar{i}_{k+1}$ and $\bar{p}_{k+1}$, respectively. 
It turns out that the obstruction associated to the extension (\ref{yields an extension}) is $\omega|_{ker(\omega_k)}$.

As a consequence it is proved in \cite{FMM}
\begin{prop}
The deformation $\eta_k$ with base $C_{k}$ of a Leibniz algebra $L$ admits an extension to a deformation with base $C_{k+1}$, which is unique up to an isomorphism and an automorphism of the extension
$$0\longrightarrow
 (ker(\omega_k))^\prime \stackrel{i_{k+1}}{\longrightarrow} C_{k+1}
\stackrel{p_{k+1}}{\longrightarrow} C_k \longrightarrow 0.$$
\end{prop}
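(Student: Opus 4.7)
The plan is to establish existence of the extension by applying the obstruction calculus recalled immediately above and observing that the relevant obstruction vanishes by the very construction of $C_{k+1}$, and to establish uniqueness by identifying the set of extensions as a torsor over $(\ker\omega_k)'\otimes HL^2(L;L)$ which is then neutralized by equivalences and automorphisms of the sequence.

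For existence, I would first record that for any square-zero extension $0\to M \to B \to C_k \to 0$ of $C_k$, classified by a Harrison cocycle $c$, the obstruction to extending the deformation $\eta_k$ to a deformation with base $B$ is $\theta_{\eta_k}(c)\in M\otimes HL^3(L;L)$. Applied to the universal extension (\ref{universal extension}) with $M=H_{Harr}^2(C_k;\mathbb{K})'$ and tautological class $[f_{C_k}]$, this obstruction is $\theta_{\eta_k}([f_{C_k}])$, which under the identification $M\otimes HL^3(L;L)\cong\mathrm{Hom}(M',HL^3(L;L))$ is, by the definition of $\omega_k$, the map $\omega_k$ itself. Since (\ref{yields an extension}) is obtained from (\ref{universal extension}) by pushing out along the projection $H_{Harr}^2(C_k;\mathbb{K})'\twoheadrightarrow(\ker\omega_k)'$ dual to the inclusion $\ker\omega_k \hookrightarrow H_{Harr}^2(C_k;\mathbb{K})$, functoriality of $\theta_{\eta_k}$ in the coefficient module identifies the obstruction along (\ref{yields an extension}) with the restriction $\omega_k|_{\ker\omega_k}$, which is zero by definition; hence the extension $\eta_{k+1}$ exists.

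For uniqueness, let $\eta'$ and $\eta''$ be two deformations with base $C_{k+1}$ extending $\eta_k$. Their bracket difference is supported on $i_{k+1}((\ker\omega_k)')\otimes L$; imposing the Leibniz identity on each order-by-order shows this difference is a $2$-cocycle, producing a class in $(\ker\omega_k)'\otimes HL^2(L;L)$. The plan is to construct an $A$-linear automorphism of $C_{k+1}\otimes L$ of the form $\mathrm{id}+\xi$, with $\xi$ valued in $i_{k+1}((\ker\omega_k)')\otimes L$, that transforms $\eta'$ into $\eta''$ modulo a coboundary, and then to absorb any residual class by composing with a suitable automorphism of the exact sequence (\ref{yields an extension}); here Proposition \ref{couniversal} enters, as it pins down the action of such automorphisms on the infinitesimal part of the deformation.

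The main obstacle is the naturality/pushout statement for the obstruction map that drives existence: $\theta_{\eta_k}$ is defined through Leibniz cochains while the pushout happens on the Harrison side, so one must trace the duality between $\omega_k$ and $\omega_k'$ carefully and verify that the obstruction class behaves functorially under change of coefficient module. Once that is done, the remainder of the argument is bookkeeping with the cohomological machinery already established in Section \ref{lcohomology}.
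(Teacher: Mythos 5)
Note first that the paper itself offers no proof of this proposition: it is quoted from \cite{FMM}, and the only justification given in the surrounding text is the remark that ``the obstruction associated to the extension (\ref{yields an extension}) is $\omega_k|_{\ker(\omega_k)}$.'' Your existence argument is a correct unwinding of exactly that remark: $C_{k+1}=\bar C_{k+1}/\bar i_{k+1}(\omega_k'(HL^3(L;L)'))$ is precisely the pushout of (\ref{universal extension}) along the surjection $H_{Harr}^2(C_k;\mathbb{K})'\twoheadrightarrow \mathrm{coker}(\omega_k')\cong(\ker\omega_k)'$ dual to the inclusion $\ker\omega_k\hookrightarrow H_{Harr}^2(C_k;\mathbb{K})$, and naturality of $\theta_{\eta_k}$ in the coefficient module (which does hold for the obstruction map of \cite{FMM}, and which you rightly single out as the point needing care) turns $\theta_{\eta_k}([f_{C_k}])=\omega_k$ into the restriction $\omega_k|_{\ker\omega_k}=0$. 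That half is sound and coincides with the argument the paper is alluding to.

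The uniqueness half is where your proposal stops short of a proof. You correctly reduce to the statement that the extensions of $\eta_k$ to base $C_{k+1}$, modulo equivalences inducing the identity on $C_k\otimes L$, form a torsor over $(\ker\omega_k)'\otimes HL^2(L;L)$; but the assertion that the residual class can be ``absorbed by a suitable automorphism of the exact sequence'' is exactly the content of the claim being proved, and you never say why the action of such automorphisms on this torsor is transitive. The missing computation is the following: an automorphism of (\ref{yields an extension}) inducing the identity on $C_k$ and on $(\ker\omega_k)'$ has the form $\mathrm{id}+\nu$ with $\nu\in \mathrm{Hom}(\mathfrak{M}_k/\mathfrak{M}_k^2,(\ker\omega_k)')$, and its effect on the torsor is to shift the extension class by the pairing of $\nu$ with the differential $d\eta_k\colon (\mathfrak{M}_k/\mathfrak{M}_k^2)'\rightarrow\mathbb{H}$. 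Since ${p}_*$-compatibility forces $\eta_k$ to restrict to $\eta_1$ and $d\eta_1=\mathrm{id}_{\mathbb{H}}$, the map $d\eta_k$ is a split surjection, so every element of $(\ker\omega_k)'\otimes\mathbb{H}$ is realized by some $\nu$. This is the precise place where Proposition \ref{couniversal} enters; without spelling it out, the uniqueness statement is asserted rather than established.
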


By induction, the above process yields a sequence of finite dimensional local algebras $C_{k}$ and deformations $\eta_{k}$ of the Leibniz algebra $L$ with base $C_{k}$ 
$$ \mathbb{K} \stackrel{p_{1}}{\longleftarrow} C_{1} \stackrel{p_{2}}{\longleftarrow} C_{2}\stackrel{p_{3}}{\longleftarrow} \ldots \ldots \stackrel{p_{k}}{\longleftarrow} C_{k}\stackrel{p_{k+1}}{\longleftarrow} C_{k+1}\ldots$$
such that $ {p_{k+1}}_{*} \eta_{k+1}=\eta_{k}$.
Thus by taking the projective limit we obtain a formal deformation $\eta$ of $L$ with base $C=\ilim_{k\rightarrow
\infty} C_{k}$.

\section{Massey Brackets and Obstructions}\label{Massey Brackets and Obstructions}

After constructing the universal infinitesimal deformation, one would like to extend it to higher order deformation. For this we need to compute obstructions. The standard procedure is to relate obstructions to Massey brackets. The connection between these two notions was first noticed in \cite{D}. A general approach to treat Massey brackets is given in \cite{FuL}. This approach is used to establish connection between Massey brackets and obstructions arising from Lie algebra deformations.

The aim of this section is to apply results in \cite{FuL} to relate Massey brackets to obstructions in the deformation of Leibniz algebras. A special case of the general definition is an inductive definition of Retakh (\cite{R, FuL}) which is useful for computational purposes.  

Suppose $(\mathcal{L},\nu,d)$ is a differential graded Lie algebra. We denote by $\mathcal{H}= \bigoplus_i \mathcal{H}^i$, the cohomology of $\mathcal{L}$ with respect to the differential $d$.
Let $F$ be a graded cocommutative coassociative coalgebra, that is a  graded vector space with a degree $0$ mapping (comultiplication) $\Delta:F\longrightarrow F\otimes F$ satisfying the conditions $S\circ \Delta=\Delta $ and $(1\otimes \Delta)\circ \Delta= (\Delta \otimes 1)\circ \Delta$, where 
$$S:F\otimes F\longrightarrow F\otimes F $$ is defined as $$S(\phi \otimes \psi)=(-1)^{|\phi||\psi|}(\psi \otimes \phi).$$
Suppose also that a filtration $F_0 \subset F_1\subset F$  is given in $F$, such that $F_0 \subset ker (\Delta)$ and $ Im (\Delta) \subset F_1\otimes F_1$. We need the following result (see \cite{FuL}.) 
\begin{prop}\label{horizontal}
Suppose a linear mapping $\alpha:F_1 \longrightarrow \mathcal{L}$ of degree $1$ satisfies the condition 
\begin{equation}\label{for Massey brackets}
d \alpha =\nu \circ (\alpha \otimes \alpha)\circ \Delta.
\end{equation}
Then $\nu \circ (\alpha \otimes \alpha)\circ \Delta (F)\subset ker (d) $.
\end{prop}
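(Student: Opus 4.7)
The plan is to prove $d\bigl(\nu\circ(\alpha\otimes\alpha)\circ\Delta(x)\bigr)=0$ for each $x\in F$ by a direct computation that mirrors the standard Maurer--Cartan argument in a DGLA. Fix $x\in F$; since $\mathrm{Im}(\Delta)\subset F_{1}\otimes F_{1}$, writing $\Delta(x)=\sum x_{(1)}\otimes x_{(2)}$ in Sweedler notation places each factor in $F_{1}$, so $\alpha$ can be applied to each. Using that $d$ is a graded derivation with respect to $\nu$, I compute
\begin{equation*}
d\bigl(\nu(\alpha\otimes\alpha)\Delta(x)\bigr)=\sum \nu\bigl(d\alpha(x_{(1)}),\alpha(x_{(2)})\bigr)+\sum (-1)^{|\alpha(x_{(1)})|}\nu\bigl(\alpha(x_{(1)}),d\alpha(x_{(2)})\bigr).
\end{equation*}

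Each $x_{(i)}\in F_{1}$, so the hypothesis (\ref{for Massey brackets}) applies term-by-term and I replace every $d\alpha(x_{(i)})$ by $\nu(\alpha\otimes\alpha)\Delta(x_{(i)})$. Invoking coassociativity $(\Delta\otimes 1)\Delta=(1\otimes\Delta)\Delta$ to rewrite both double iterated sums over the single triple Sweedler sum $\sum x_{1}\otimes x_{2}\otimes x_{3}$, the identity collapses to
\begin{equation*}
d\bigl(\nu(\alpha\otimes\alpha)\Delta(x)\bigr)=\sum \bigl[[\alpha(x_{1}),\alpha(x_{2})],\alpha(x_{3})\bigr]+\sum (-1)^{|\alpha(x_{1})|}\bigl[\alpha(x_{1}),[\alpha(x_{2}),\alpha(x_{3})]\bigr],
\end{equation*}
with brackets standing for $\nu$. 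I would then expand the second summand using graded Jacobi, $[a,[b,c]]=[[a,b],c]+(-1)^{|a||b|}[b,[a,c]]$, and use cocommutativity $S\circ\Delta=\Delta$ to transpose Sweedler factors in the resulting terms; cocommutativity combined with coassociativity makes $\sum x_{1}\otimes x_{2}\otimes x_{3}$ graded-symmetric under $S_{3}$, so I can reindex and match terms until the three summands cancel.

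The main obstacle is sign bookkeeping. Transposing Sweedler factors contributes a Koszul sign $(-1)^{|x_{i}||x_{j}|}$, whereas transposing the corresponding arguments inside $\nu$ in $\mathcal{L}$ contributes $(-1)^{|\alpha(x_{i})||\alpha(x_{j})|}$; since $|\alpha(x_{i})|=|x_{i}|+1$ these exponents differ by the nontrivial parity $|x_{i}|+|x_{j}|+1$, and this discrepancy is precisely what makes the Jacobi cycle telescope to zero. A cleaner, more structural route is to endow $\mathrm{Hom}(F,\mathcal{L})$ with its convolution DGLA structure coming from $(\Delta,\nu)$: in that algebra, (\ref{for Massey brackets}) is literally the Maurer--Cartan equation for $\alpha$, and the conclusion becomes the classical identity that the curvature of a Maurer--Cartan element is $d$-closed, which reduces to $[[\alpha,\alpha],\alpha]=0$ via graded Jacobi with $\alpha$ of odd degree.
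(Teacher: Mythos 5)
The paper gives no proof of Proposition \ref{horizontal} at all --- it is imported from Fuchs--Lang with the parenthetical ``(see \cite{FuL})'' --- so there is nothing in the text to compare your argument against; I can only judge it on its own terms. On those terms it is essentially correct and is the standard Maurer--Cartan/Bianchi computation: the derivation property of $d$ over $\nu$, substitution of (\ref{for Massey brackets}) into each Sweedler factor (legitimate because $\mathrm{Im}(\Delta)\subset F_1\otimes F_1$ puts every component in the domain of $\alpha$, and coassociativity keeps the components of the iterated coproduct in $F_1$), rewriting both resulting sums over the triple coproduct, and cancellation by graded Jacobi plus cocommutativity. Your closing reformulation is the right way to make this airtight: cocommutativity and coassociativity of $\Delta$ together with the DGLA axioms for $(\mathcal{L},\nu,d)$ make the convolution bracket $[\varphi,\psi]=\nu\circ(\varphi\otimes\psi)\circ\Delta$ into a graded Lie bracket on the relevant Hom-space with $d$ acting as a derivation; condition (\ref{for Massey brackets}) becomes $d\alpha=[\alpha,\alpha]$ for the odd element $\alpha$, and then $d[\alpha,\alpha]=2[d\alpha,\alpha]=2[[\alpha,\alpha],\alpha]=0$ by graded Jacobi in characteristic zero. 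All Koszul signs are absorbed into the axioms of the convolution DGLA, so the sign bookkeeping you flag as the main obstacle never has to be done by hand.

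Two points to tighten before this counts as a complete proof. First, $\alpha$ is defined only on $F_1$, so it is not literally an element of $\mathrm{Hom}(F,\mathcal{L})$ and ``the Maurer--Cartan equation in the convolution DGLA'' is not literally available; you should either extend $\alpha$ by zero to $F$ and observe that every evaluation of $\alpha$ or $d\alpha$ occurring in the computation of $d\bigl(\nu(\alpha\otimes\alpha)\Delta(x)\bigr)$ happens on components of (iterated) coproducts, which lie in $F_1$, or simply carry out the term-by-term Sweedler version you set up in the first half. Second, the sentence asserting that the parity discrepancy between $(-1)^{|x_i||x_j|}$ and $(-1)^{|\alpha(x_i)||\alpha(x_j)|}$ ``is precisely what makes the Jacobi cycle telescope to zero'' is a claim, not a verification; in the convolution formulation it is exactly the statement that the convolution bracket is graded antisymmetric for the degrees shifted by $|\alpha|=1$, so lean on that rather than on an unchecked sign identity.
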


\begin{defn}\label{definition of Massey Bracket}
Let $a:F_0\longrightarrow \mathcal{H}$, $b:F/F_1\longrightarrow \mathcal{H}$ be two linear maps of degree $1$. We say that $b$ is contained in the Massey $F$-bracket of $a$, and write $b \in [a]_F$, or $b \in [a]$, if there exists a degree $1$ linear mapping $\alpha:F_1 \longrightarrow \mathcal{L}$ satisfying condition (\ref{for Massey brackets}) and such that the following diagrams are commutative, where the vertical maps labeled by $\pi$ denote the projections of each space onto the quotient space.

\begin{figure}[htb]
\begin{center}
\includegraphics[width=8.5cm]{masseyF1.epsi}
\end{center}
\caption{}
\end{figure}
\end{defn}

Note that the upper horizontal maps of the above diagrams are well defined, since $\alpha(F_0)\subset \alpha(ker \Delta)\subset ker (d)$ by virtue of (\ref{for Massey brackets}), and $\nu \circ (\alpha \otimes \alpha)\circ \Delta(F)\subset ker (d)$ by Proposition \ref{horizontal}.

The definition makes sense even if $F_1=F$. In that case $Hom (F/F_1,~\mathbb{K})=0$, and $[a]_F$ may either be empty or contain 0. In that case we say that $a ~ satisfies~ the~ condition ~of~ triviality~ of~ Massey ~F\mbox{-}brackets$.

Let $A$ be a complete local algebra with $1$ and augmentation $\varepsilon$. Let $\mathfrak{M}=ker(\varepsilon)$.
Let $\rho: (A \otimes L)\times (A \otimes L)\longrightarrow (A \otimes L)$ be a $A$-bilinear operation on $A \otimes L$ ($\rho$ need not satisfy the Leibniz identity) such that $\varepsilon \otimes id :A \otimes L \longrightarrow L$ is a homomorphism with respect to the operation $\rho$ on $A \otimes L$ and the usual bracket operation on $L$ in other words,
$$(\varepsilon \otimes id )\circ \rho(a_1 \otimes l_1, a_2 \otimes l_2)= \varepsilon (a_1 a_2) [l_1,l_2].$$
Note that for $1\otimes l_1,~1\otimes l_2 \in A \otimes L$ we have
$$(\varepsilon \otimes id)\circ \rho(1\otimes l_1,1\otimes l_2)= \varepsilon(1)[l_1,l_2]=\varepsilon \otimes id (1 \otimes [l_1,l_2])$$
Therefore \begin{equation}\label{rho}
 \rho(1\otimes l_1,1 \otimes l_2)-1 \otimes [l_1,l_2] \in ker (\varepsilon \otimes id )=ker (\varepsilon) \otimes L=\mathfrak{M}\otimes L.
 \end{equation}
 We consider the differential graded Lie algebra $(CL^{*}(L;L),\nu, d)$. Let $F=F_1=\mathfrak{M}'$, the dual of $\mathfrak{M}$ and $F_0=(\mathfrak{M}/\mathfrak{M}^2)^\prime$. Let $\Delta: F \longrightarrow F\otimes F$ be the comultiplication in $F$ which is the dual of the multiplication in $\mathfrak{M}$. Then $F$ is a cocommutative coassociative coalgebra.
For a linear functional $\phi:{\mathfrak{M}}\longrightarrow \mathbb{K}$ define a map $\alpha_{\phi}:L\otimes L \longrightarrow L$ by  
$$ \alpha_{\phi}(l_1,l_2)= (\phi \otimes id)(\rho(1\otimes l_1,1 \otimes l_2)-1 \otimes [l_1,l_2] ).$$
This gives $\alpha:{\mathfrak{M}}'\longrightarrow CL^2(L;L)$ by $\phi \mapsto \alpha_{\phi}$. From the definition it is clear that $\rho$ and $\alpha$ determine each other. 
Then we have 
\begin{prop}\label{Leibniz identity iff}
The operation $\rho$ satisfies the Leibniz identity if and only if $\alpha$ satisfies the   equation $d\alpha -\frac{1}{2}\nu\circ(\alpha\otimes \alpha)\circ \Delta=0$.
\end{prop}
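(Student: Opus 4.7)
The plan is to write $\rho = \rho_0 + R'$, where $\rho_0(a\otimes l_1, b\otimes l_2) := ab\otimes[l_1,l_2]$ is the ``untwisted'' bracket (which already satisfies Leibniz, since $L$ does) and $R'$ is the $A$-bilinear extension of $R(l_1,l_2) := \rho(1\otimes l_1,1\otimes l_2) - 1\otimes[l_1,l_2] \in \mathfrak{M}\otimes L$ from (\ref{rho}). The data $R'$ and $\alpha$ determine each other via $\alpha_\phi(l_1,l_2) = (\phi\otimes id)\,R(l_1,l_2)$. Expanding $\rho(x,\rho(y,z)) - \rho(\rho(x,y),z) + \rho(\rho(x,z),y)$ along this splitting produces three classes of terms: purely $\rho_0$-contributions that cancel by the Leibniz identity of $L$, contributions linear in $R'$, and contributions quadratic in $R'$. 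Hence the Leibniz identity for $\rho$ is equivalent to the vanishing of the linear-plus-quadratic part, and it suffices to check this after applying $\phi\otimes id$ for arbitrary $\phi\in\mathfrak{M}'$, evaluated on triples $(1\otimes l_1,1\otimes l_2,1\otimes l_3)$.

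For the linear part one checks, for instance, $(\phi\otimes id)\rho_0(1\otimes l_1,R(l_2,l_3)) = [l_1,\alpha_\phi(l_2,l_3)]$ and $(\phi\otimes id)R'(1\otimes l_1,1\otimes[l_2,l_3]) = \alpha_\phi(l_1,[l_2,l_3])$, and analogously for the four remaining summands; collecting the six contributions reproduces exactly the defining formula for $\delta\alpha_\phi(l_1,l_2,l_3)$, hence (via $d = (-1)^{|\alpha|}\delta$) for $d\alpha_\phi(l_1,l_2,l_3)$. For the quadratic part, applying $\phi$ forces one to evaluate it on products of elements of $\mathfrak{M}$, which is precisely what the comultiplication records: if $\Delta\phi = \sum_i \phi'_i\otimes\phi''_i$ then $\phi(ab) = \sum_i\phi'_i(a)\phi''_i(b)$. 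Writing $R(l_2,l_3) = \sum_j a_j\otimes m_j$ and using $A$-bilinearity of $R'$, one obtains $R'(1\otimes l_1,R(l_2,l_3)) = \sum_j a_j\,R(l_1,m_j)$, and pushing $\phi$ through yields $\sum_i\alpha_{\phi''_i}(l_1,\alpha_{\phi'_i}(l_2,l_3))$. Repeating the same manipulation for the other two quadratic terms and matching with the explicit formula for the circle product $\alpha_{\phi''_i}\circ\alpha_{\phi'_i}$ specialized to $p=q=1$, then using cocommutativity of $\Delta$ (inherited from commutativity of $A$) to symmetrize $\sum_i \alpha_{\phi''_i}\circ\alpha_{\phi'_i}$ into $\tfrac12\sum_i[\alpha_{\phi'_i},\alpha_{\phi''_i}]$, identifies the quadratic sum with $\frac{1}{2}\,\nu\circ(\alpha\otimes\alpha)\circ\Delta(\phi)$.

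The main obstacle will be the sign bookkeeping: the three terms in the Leibniz identity for $\rho$, the six in $\delta\alpha_\phi$, the three in $\alpha\circ\alpha$ for $p=q=1$, together with the conventions $d = (-1)^{|\alpha|}\delta$ and $[\alpha,\beta] = \alpha\circ\beta + (-1)^{pq+1}\beta\circ\alpha$, all have to be threaded together to produce precisely the coefficient $-\frac{1}{2}$ in the stated Maurer--Cartan-type equation. Once the dictionary between $\rho$ and $\alpha$ is fixed and the circle-product formula is written out for $p=q=1$, the verification reduces to a direct term-by-term match between the two sides.
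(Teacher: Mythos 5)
Your proposal follows essentially the same route as the paper's proof: split $\rho$ into the untwisted bracket plus the perturbation encoded by $\alpha$, substitute into the Leibniz identity, apply $\phi\otimes id$, and match the linear part with $\delta\alpha_\phi$ and the quadratic part with $\tfrac{1}{2}\nu\circ(\alpha\otimes\alpha)\circ\Delta(\phi)$ via the comultiplication dual to the product on $\mathfrak{M}$. The only cosmetic difference is that the paper fixes a basis $\{m_i\}$ of $\mathfrak{M}$ from the outset and writes $\rho(1\otimes l_1,1\otimes l_2)=1\otimes[l_1,l_2]+\sum_i m_i\otimes\psi_i(l_1,l_2)$, whereas you keep the decomposition basis-free until the quadratic step; the sign bookkeeping you flag is indeed where all the remaining work lies, and the paper carries it out exactly as you outline.
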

\begin{proof}
 Let $\{m_i\}$ be a basis of $\mathfrak{M}$. Using (\ref{rho}) we can write $$\rho(1\otimes l_1,1 \otimes l_2)=1 \otimes [l_1,l_2]+\sum_{i}m_i\otimes \psi_i(l_1,l_2)$$ where $\psi_i \in CL^2(L;L)$ is given by $\psi_i=\alpha_{m^\prime_i}$. 

\begin{equation*}
 \begin{split}
 \mbox{Thus}~~&\rho(1\otimes l_1 ,\rho(1\otimes l_2,1\otimes l_3))\\
=&~\rho(1 \otimes l_1,1\otimes [l_2,l_3]+\sum_{i}m_i\otimes \psi_i(l_2,l_3))\\
=&~\rho(1\otimes l_1,1\otimes[l_2,l_3])+\sum_{i}m_i\rho(1\otimes l_1,1\otimes \psi_i(l_2,l_3))\\
=&~1\otimes [l_1,[l_2,l_3]]+\sum_{i}m_i \otimes \psi_i(l_1,[l_2,l_3])+\sum_{i}m_i \otimes [l_1,\psi_i(l_2,l_3)]\\
&~~~+\sum_{i,j}m_im_j \otimes \psi_j(l_1,\psi_i(l_2,l_3)).
 \end{split}
\end{equation*}

\begin{equation*}
 \begin{split}
\mbox{Similarly}~~ &\rho(\rho(1\otimes l_1,1\otimes l_2),1\otimes l_3)\\
=&~1\otimes [[l_1,l_2],l_3]
+\sum_{i}m_i \otimes \psi_i([l_1,l_2],l_3)+\sum_{i}m_i \otimes [\psi_i(l_1,l_2),l_3]\\
&~~~+\sum_{i,j}m_im_j \otimes \psi_j(\psi_i(l_1,l_2),l_3)
 \end{split}
\end{equation*}

\begin{equation*}
\begin{split}
\mbox{and}~~&\rho(\rho(1\otimes l_1,1\otimes l_3),1\otimes l_2)\\
=&~1\otimes [[l_1,l_3],l_2]
+\sum_{i}m_i \otimes \psi_i([l_1,l_3],l_2)+\sum_{i}m_i \otimes [\psi_i(l_1,l_3),l_2]\\
&~~~+\sum_{i,j}m_im_j \otimes \psi_j(\psi_i(l_1,l_3),l_2).
 \end{split}
\end{equation*}
For any linear functional $\phi:\mathfrak{M} \longrightarrow \mathbb{K}$, let $\phi(m_i)=x_i \in \mathbb{K}$. Then by (\ref{rho})
\begin{equation*}
\begin{split}
\alpha_{\phi}(l_1,l_2)=&(\phi \otimes id)(\sum_{i}m_i\otimes \psi_i(l_1,l_2))\\
=&\sum_{i}x_i \otimes \psi_i(l_1,l_2)\\
=&1\otimes (\sum_{i}x_i \psi_i)(l_1,l_2).
 \end{split}
\end{equation*}
So, $\alpha_{\phi}$ can be expressed as $\sum_{i}x_i \psi_i$.
 Let $\Delta(\phi)=\sum_{p}\xi_p \otimes \eta_p~~~\mbox{for some}~\xi_p,\eta_p \in \mathfrak{M}'$. We set $\xi_p(m_i)=\xi_{p,i}~~\mbox{and}~\eta_p(m_i)=\eta_{p,i}$. Thus 
 \begin{equation*}
 \begin{split}
 \phi(m_i~ m_j)=&~\Delta(\phi)(m_i\otimes m_j)
 =(\sum_{p}\xi_p\otimes \eta_p)(m_i\otimes m_j)
 =\sum_{p}\xi_{p,i}~\eta_{p,j}.
 \end{split}
 \end{equation*}

\begin{equation*}
 \begin{split}
\mbox{Now}~~~&(\phi \otimes id)( \sum_{i,j}m_im_j \otimes \psi_j(l_1,\psi_i(l_2,l_3))\\
 =&~\sum_{i,j,p}\xi_{p,i}~\eta_{p,j}~\psi_j(l_1,\psi_i(l_2,l_3))\\
 =&~\sum_{p}(\sum_{i}\xi_{p,i}(\sum_{j}\eta_{p,j}\psi_j(l_1,\psi_i(l_2,l_3))))\\
 =&~\sum_{p}(\sum_{i}\xi_{p,i}\alpha_{\eta_p}(l_1,\psi_i(l_2,l_3)))\\
 =&~\sum_{p}\alpha_{\eta_p}(l_1,\sum_i\xi_{p,i}\psi_i(l_2,l_3))\\
 =&~\sum_{p}\alpha_{\eta_p}(l_1,\alpha_{\xi_p}(l_2,l_3)).
\end{split}
\end{equation*}
\begin{equation*}
 \begin{split}
\mbox{Therefore}~~&(\phi \otimes id)( \rho(1\otimes l_1 ,\rho(1\otimes l_2,1\otimes l_3)))\\
 =&~ \sum_{i}\phi(m_i)\otimes \psi_i(l_1,[l_2,l_3])+\sum_{i}\phi(m_i)\otimes [l_1,\psi_i(l_2,l_3)] \\
 &~~~+\sum_{p}\alpha_{\eta_p}(l_1,\alpha_{\xi_p}(l_2,l_3))\\ =&~\alpha_{\phi}(l_1,[l_2,l_3])+[l_1,\alpha_{\phi}(l_2,l_3)]
 +\sum_{p}\alpha_{\eta_p}(l_1,\alpha_{\xi_p}(l_2,l_3)).
 \end{split}
\end{equation*}
\begin{equation*}
 \begin{split}
\mbox{Similarly}~~ &(\phi \otimes id)(\rho(\rho(1\otimes l_1,1\otimes l_2),1\otimes l_3))\\
=&~\alpha_{\phi}([l_1,l_2],l_3)+[\alpha_{\phi}(l_1,l_2),l_3]
 +\sum_{p}\alpha_{\eta_p}(\alpha_{\xi_p}(l_1,l_2),l_3).
 \end{split}
\end{equation*}

\begin{equation*}
 \begin{split}
\mbox{and}~~&(\phi \otimes id)(\rho(\rho(1\otimes l_1,1\otimes l_3),1\otimes l_2))\\
=&~\alpha_{\phi}([l_1,l_3],l_2)+[\alpha_{\phi}(l_1,l_3),l_2]
 +\sum_{p}\alpha_{\eta_p}(\alpha_{\xi_p}(l_1,l_3),l_2).
 \end{split}
\end{equation*}

 \begin{equation*}
 \begin{split}
\mbox{Hence we get,}~&(\phi \otimes id)(\rho(1\otimes l_1 ,\rho(1\otimes l_2,1\otimes l_3))-\rho(\rho(1\otimes l_1, 1\otimes l_2),1\otimes l_3 )\\
 &~~~+\rho(\rho(1\otimes l_1, 1\otimes l_3),1\otimes l_2 ))\\
 =&~\delta \alpha_{\phi}(l_1,l_2,l_3)+\frac{1}{2}\sum_{p}[\alpha_{\eta_p},\alpha_{\xi_p}](l_1,l_2,l_3)\\
 =&~(-d\alpha +\frac{1}{2}\nu\circ(\alpha\otimes \alpha)\circ \Delta )\phi(l_1,l_2,l_3). 
 \end{split}
\end{equation*}
Thus it follows that $\rho$ satisfies the Leibniz identity if and only if $\alpha$ satisfies the equation $d\alpha -\frac{1}{2}\nu \circ(\alpha\otimes \alpha)\circ \Delta=0$.
\end{proof}

It follows from  Proposition \ref{Leibniz identity iff} that for a deformation $\rho$ of $L$, $\alpha(F_0)\subset ker(d)$ as $F_0 \subset ker(\Delta)$. 
Let $a$ denote the composition 
$$a:F_0 \stackrel{\alpha}\longrightarrow ker(d) \stackrel{\pi}\longrightarrow \mathbb{H}~~\mbox{where}~\mathbb{H}=HL^2(L;L).$$
Then the following is a consequence of Proposition \ref{Leibniz identity iff} and definition of Massey $F$ bracket. 
\begin{cor}
A linear map $a:F_0 \longrightarrow \mathbb{H}$ is a differential of some deformation with base $A$ if and only if $\frac{1}{2}a$ satisfies the condition of triviality of Massey $F$-brackets.
\end{cor}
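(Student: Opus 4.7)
My plan is to read the corollary as a direct translation between two descriptions of the same data. By Proposition~\ref{Leibniz identity iff}, a deformation $\lambda$ with base $A$ is (via $\rho\leftrightarrow\alpha$) the same thing as a degree-$1$ linear map $\alpha:\mathfrak{M}'\to CL^{2}(L;L)$ satisfying $d\alpha=\tfrac{1}{2}\nu\circ(\alpha\otimes\alpha)\circ\Delta$. On the other hand, in the present setting $F=F_{1}=\mathfrak{M}'$ and $F_{0}=(\mathfrak{M}/\mathfrak{M}^{2})'$, triviality of the Massey $F$-bracket of a map $c:F_{0}\to\mathbb{H}$ amounts to the existence of a degree-$1$ linear map $\beta:F_{1}\to CL^{2}(L;L)$ satisfying condition~(\ref{for Massey brackets}), $d\beta=\nu\circ(\beta\otimes\beta)\circ\Delta$, together with $\pi\circ\beta|_{F_{0}}=c$, where $\pi:\ker d\to\mathbb{H}$ is the projection (since $F/F_{1}=0$ makes the second diagram in Definition~\ref{definition of Massey Bracket} vacuous and forces $b=0$).

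The rescaling $\alpha=2\beta$ is what reconciles the two equations: a direct check shows that $\beta$ satisfies the Massey equation~(\ref{for Massey brackets}) if and only if $\alpha=2\beta$ satisfies the equation of Proposition~\ref{Leibniz identity iff}. Thus the chain of correspondences $\beta\leftrightarrow\alpha=2\beta\leftrightarrow\rho\leftrightarrow\lambda$ sets up a bijection between Massey witnesses for the triviality of $\tfrac{1}{2}a$ and deformations $\lambda$ of $L$ with base $A$.

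What remains is to identify $d\lambda$ with $\pi\circ\alpha|_{F_{0}}$, because then $a=d\lambda$ iff $a=\pi\circ\alpha|_{F_{0}}=2\,\pi\circ\beta|_{F_{0}}$ iff $\tfrac{1}{2}a=\pi\circ\beta|_{F_{0}}$, which is exactly the diagrammatic condition in Definition~\ref{definition of Massey Bracket}. For this I would appeal to the construction of $d\lambda$ as the classifying map $a_{\pi_{*}\lambda}$ from Proposition~\ref{couniversal}: reducing $\lambda$ modulo $\mathfrak{M}^{2}$ kills the quadratic part and keeps only the restriction $\alpha|_{F_{0}}$, which lands in $\ker d$ because $F_{0}\subset\ker\Delta$ annihilates the right-hand side of the equation of Proposition~\ref{Leibniz identity iff}. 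By Remark~\ref{exp of inf} this restriction is precisely a cocycle representative of $d\lambda$ in $\mathbb{H}$, which yields the identification and completes both directions of the corollary.

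The only genuine subtlety is the bookkeeping of the factor $\tfrac{1}{2}$ and making sure the Massey diagram is interpreted correctly in the degenerate case $F=F_{1}$; no further ideas beyond Proposition~\ref{Leibniz identity iff} and the universal property of $\eta_{1}$ are required.
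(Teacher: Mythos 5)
Your argument is correct and is exactly the route the paper intends: the paper states this corollary without proof, merely as ``a consequence of Proposition \ref{Leibniz identity iff} and the definition of Massey $F$-bracket,'' and your write-up supplies precisely that chain --- the rescaling $\alpha=2\beta$ reconciling $d\alpha=\tfrac12\nu\circ(\alpha\otimes\alpha)\circ\Delta$ with condition (\ref{for Massey brackets}), the degeneracy $F=F_1$, and the identification of $d\lambda$ with $\pi\circ\alpha|_{F_0}$ via Remark \ref{exp of inf} and Proposition \ref{couniversal}. No discrepancy with the paper's approach.
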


Next we relate the obstruction  $\omega_k$ at the $k$th stage in the construction of versal deformation to Massey brackets.
Consider the sequence of finite dimensional local  algebras $C_{k}$ with maximal ideals $\mathfrak{M}_k$ and deformations $\eta_{k}$ of the Leibniz algebra $L$ with base $C_{k}$ yielding an inverse system
$$ \mathbb{K} \stackrel{p_{1}}{\longleftarrow} C_{1} \stackrel{p_{2}}{\longleftarrow} C_{2}\stackrel{p_{3}}{\longleftarrow} \ldots \ldots \stackrel{p_{k}}{\longleftarrow} C_{k}\stackrel{p_{k+1}}{\longleftarrow} C_{k+1}\ldots$$ 
$$\mbox{where}~~ {p_{k+1}}_{*} \eta_{k+1}=\eta_{k}.$$
Taking the dual we get the direct system
$$ \mathbb{K} \stackrel{p' _{1}}{\longrightarrow} C'_{1} \stackrel{p' _{2}}{\longrightarrow} C'_{2}\stackrel{p' _{3}}{\longrightarrow} \ldots \ldots \stackrel{p' _{k}}{\longrightarrow} C'_{k}\stackrel{p'_{k+1}}{\longrightarrow} C'_{k+1}\ldots.$$
Also, by considering the maximal ideals $\mathfrak{M}_k$ we get another system
$$ \mathbb{K} \stackrel{p' _{1}}{\longrightarrow} \mathfrak{M}'_{1} \stackrel{p' _{2}}{\longrightarrow} \mathfrak{M}'_{2}\stackrel{p' _{3}}{\longrightarrow} \ldots \ldots \stackrel{p' _{k}}{\longrightarrow} \mathfrak{M}'_{k}\stackrel{p'_{k+1}}{\longrightarrow} \mathfrak{M}'_{k+1}\ldots$$
where each $p'_k$ is injective.
In the induction process we get an extension of $C_k$ given by 
$$0\longrightarrow
 H_{Harr}^2(C_k;\mathbb{K})^\prime \stackrel{\bar {i}_{k+1}}{\longrightarrow} {\bar C}_{k+1}
\stackrel{\bar{p}_{k+1}}{\longrightarrow} C_k \longrightarrow 0$$
where the obstruction for extending $\eta_k$ to a deformation of $L$ with base $\bar{C}_{k+1}$ is given by 
$\omega_k:H_{Harr}^2(C_k;\mathbb{K}) \longrightarrow HL^3(L;L).$
To make this obstruction zero we consider $$C_{k+1}= { \bar{C}_{k+1}}/{\bar{i}_{k+1}\circ~ \omega_{k}^\prime (HL^3(L;L))} .$$
Let $F=(\bar {\mathfrak{M}}_{k+1})';~F_1=\mathfrak{M}'_k~~\mbox{and}~F_0=\mathfrak{M}'_{1}=\mathbb{H}$.\\
Thus $F/F_1=H^2_{Harr}(C_k;\mathbb{K})$ and $\omega_k$ can be viewed as a map
$$\omega_k:F/F_1\longrightarrow HL^3(L;L).$$
\begin{thm}\label{obs at each stage}
The obstruction $\omega_k$ has the property, 
$2\omega_k \in [id]_F$. Moreover, an arbitrary element of $[id]_F$ is equal to $2\omega_k$ for an appropriate extension of the deformation $\eta_1$ of $L$ with base $C_1$ to a deformation $\eta_k$ of $L$ with base $C_k$.
\end{thm}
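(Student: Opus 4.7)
The plan is to construct, from the deformation $\eta_k$ and an extension of it to $\bar C_{k+1}\otimes L$, an explicit witness $\alpha$ for the Massey $F$-bracket, and then to identify the resulting element of $[id]_F$ with $2\omega_k$. The converse direction will run the same identification backwards using the assignment between deformations and Maurer--Cartan elements from Proposition \ref{Leibniz identity iff}.

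For the forward direction, I would begin by associating to $\eta_k$ the degree-$1$ map $\alpha:F_1=\mathfrak{M}'_k\to CL^2(L;L)$ given by Proposition \ref{Leibniz identity iff}, so that $d\alpha=\tfrac12\nu\circ(\alpha\otimes\alpha)\circ\Delta$ on $F_1$. Because $\eta_k$ is constructed inductively to push forward under $p_k\circ\cdots\circ p_2:C_k\to C_1$ to $\eta_1$, its differential $d\eta_k$ is the identity on $\mathbb{H}$; equivalently $\pi\circ\alpha|_{F_0}=id$ in $\mathbb{H}$, where $F_0=(\mathfrak{M}_k/\mathfrak{M}_k^2)'\cong\mathbb{H}$ sits inside $F_1$ as the dual of the quotient map $\mathfrak{M}_k\twoheadrightarrow\mathfrak{M}_k/\mathfrak{M}_k^2=\mathfrak{M}_1$. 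Then I would extend $\eta_k$ to a $\bar C_{k+1}$-bilinear operation $\bar\rho$ on $\bar C_{k+1}\otimes L$ (which need not satisfy the Leibniz identity) and let $\bar\alpha:F=\bar{\mathfrak{M}}'_{k+1}\to CL^2(L;L)$ be the corresponding extension of $\alpha$.

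By the computation in the proof of Proposition \ref{Leibniz identity iff}, the Leibniz violation of $\bar\rho$ evaluated against $\phi\in F$ equals $(-d\bar\alpha+\tfrac12\nu(\bar\alpha\otimes\bar\alpha)\Delta)(\phi)$ in $CL^3(L;L)$. Since $\eta_k$ is Leibniz, this vanishes on $F_1$, and by the very definition of $\omega_k$ in Section \ref{lcohomology} the induced map $F/F_1=H^2_{Harr}(C_k;\mathbb{K})\to HL^3(L;L)$ obtained after passing to cohomology is $\omega_k$. In cohomology the exact term $d\bar\alpha(\phi)$ drops out, and since $\Delta(F)\subset F_1\otimes F_1$ (which holds because the kernel $H^2_{Harr}(C_k;\mathbb{K})'$ of the universal extension is annihilated by $\mathfrak{M}_k$ in $\bar C_{k+1}$, being a trivial module), the cocycle $\tfrac12\nu(\bar\alpha\otimes\bar\alpha)\Delta(\phi)$ equals $\tfrac12\nu(\alpha\otimes\alpha)\Delta(\phi)$. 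Hence $\omega_k=\tfrac12[\nu(\alpha\otimes\alpha)\Delta]$ on $F/F_1$, i.e.\ $[\nu(\alpha\otimes\alpha)\Delta]=2\omega_k$, which by Definition \ref{definition of Massey Bracket} gives $2\omega_k\in[id]_F$.

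For the converse, an arbitrary $b\in[id]_F$ is realized by some witness $\alpha:F_1\to CL^2(L;L)$ satisfying the required Maurer--Cartan relation with $\pi\circ\alpha|_{F_0}=id$. Applying Proposition \ref{Leibniz identity iff} in the reverse direction, $\alpha$ determines a deformation $\eta_k$ of $L$ with base $C_k$; the fact that $\pi\circ\alpha|_{F_0}=id$ implies $d\eta_k=id$, so $\eta_k$ extends $\eta_1$. Running the forward argument on this $\eta_k$ then gives $2\omega_k=b$. The main technical hurdle is the careful factor-of-two bookkeeping between the Maurer--Cartan equation in Proposition \ref{Leibniz identity iff} (carrying a $\tfrac12$) and the Massey bracket equation in Definition \ref{definition of Massey Bracket} (without the $\tfrac12$) — this discrepancy is exactly what produces the factor $2$ in the statement — together with verifying the coalgebra compatibility $\Delta(F)\subset F_1\otimes F_1$ and the independence of $\omega_k$ from the particular extension $\bar\rho$ chosen.
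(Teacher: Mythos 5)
Your proposal follows essentially the same route as the paper: build $\alpha$ on $F_1=\mathfrak{M}'_k$ from $\eta_k$ so that Proposition \ref{Leibniz identity iff} gives $d\alpha=\tfrac12\nu\circ(\alpha\otimes\alpha)\circ\Delta$ with $\pi\circ\alpha|_{F_0}=id$, extend arbitrarily to a bilinear operation over $\bar C_{k+1}$, and read off the obstruction from the Leibniz defect, whose class is $\tfrac12[\nu\circ(\alpha\otimes\alpha)\circ\Delta]$ modulo the exact term, giving $2\omega_k\in[id]_F$; the converse is the same correspondence between such $\alpha$'s and extensions of $\eta_1$ run backwards. The paper carries out the defect computation explicitly in a basis $\{\bar m_i\}$ of $\bar{\mathfrak{M}}_{k+1}$, but the content and the factor-of-two bookkeeping are exactly as you describe.
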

\begin{proof}
As before we define a map
$$ \alpha: \mathfrak{M}'_k\longrightarrow CL^2(L;L)$$
by $\alpha_{\phi}(l_1,l_2)=(\phi\otimes id)([1\otimes l_1,1\otimes l_2]_{\eta_k}-1\otimes [l_1,l_2])
~~\mbox{for}~\phi \in \mathfrak{M}'_k~~\mbox{and}~l_1,l_2 \in L$, using the deformation $\eta_k$ with base $C_k$. 
Since $\eta_k$ is a Leibniz algebra structure on $C_k\otimes L$,  Proposition \ref{Leibniz identity iff} implies $d\alpha=\frac{1}{2}\nu \circ(\alpha\otimes \alpha)\circ\Delta$. It is clear that different $\alpha$ with these properties corresponds to different extensions $\eta_k$ of $\eta_1$.

Observe that $\alpha|_{F_0}:F_0\longrightarrow CL^2(L;L)$ is given by $\alpha|_{F_0}(h_i)=\mu(h_i)$, a representative of the cohomology class $h_i$. So
$$\alpha|_{F_0}:F_0\longrightarrow CL^2(L;L)\stackrel{\pi}\longrightarrow \mathbb{H}$$ 
gives  $a:F_0\longrightarrow \mathbb{H}$, the identity map.\\
In the definition of Massey $F$-bracket, the map $b:F/F_1\longrightarrow HL^3(L;L)$ is represented by the map $\nu \circ(\alpha\otimes \alpha)\circ \Delta:F\longrightarrow CL^3(L;L)$. In our case the obstruction is given by  $\omega_k:H_{Harr}^2(C_k;\mathbb{K}) \longrightarrow HL^3(L;L)$.
Consider  a basis $\{m_i\}_{1\leq i \leq r}$ of $\mathfrak{M}_k$  and extend it to a basis $\{\bar{m}_i \}_{1\leq i \leq r+s}$ of $\bar{\mathfrak{M}}_{k+1}$. Now we can write $$[1\otimes l_1,1\otimes l_2]_{\eta_k}=1\otimes [l_1,l_2]+\sum_{i=1}^r m_i \otimes \psi_i(l_1,l_2) .$$ 
Then by definition of $\alpha$ we have $\alpha(m'_i)(l_1,l_2)=\psi_i(l_1,l_2)~~\mbox{for}~i\geq r$.\\
For arbitrary cochains $\psi_i \in CL^2(L;L)~~\mbox{for}~r+1 \leq i\leq s$ the $\bar{C}_{k+1}$-bilinear map $\{,\}$ on $\bar{C}_{k+1}\otimes L$  is given by
 $$\{1\otimes l_1,1\otimes l_2\}=1\otimes[l_1,l_2]+\sum_{i=1}^{r+s} \bar{m}_i \otimes \psi_i(l_1,l_2).$$  
Let the multiplication in $\bar{\mathfrak{M}}_{k+1}$ be defined (on the basis) as $$\bar{m}_i~\bar{m}_j=\sum_{p=1}^{r+s}c_{i~j}^p \bar{m}_p.$$
Then $\Delta:(\bar{\mathfrak{M}}_{k+1})^\prime \longrightarrow \mathfrak{M}_{k}^\prime \otimes \mathfrak{M}_k ^\prime$ is given by $\Delta(\bar{m}'_{p})=\sum_{i,j=1}^{s}c_{ij}^p m'_i \otimes m'_j$.
Now
\begin{equation*}
\begin{split}
&\{\{1\otimes l_1,1\otimes l_2 \},1\otimes l_3\}\\
=&~\{1\otimes [l_1,l_2]+\sum_{i=1}^{r+s}\bar{m}_i \otimes \psi_i(l_1,l_2),1\otimes l_3\}\\
=&~1\otimes [[l_1,l_2],l_3]+\sum_{i=1}^{r+s}\bar{m}_i\otimes \psi_i ([l_1,l_2],l_3)
+\sum_{i=1}^{r+s}\bar{m}_i\otimes [\psi_i(l_1,l_2),l_3]\\
&~~~+\sum_{i,j=1}^{r}\bar{m}_j \bar{m}_i \otimes \psi_j(\psi_i(l_1,l_2),l_3)\\
=&~1\otimes [[l_1,l_2],l_3]+\sum_{i=1}^{r+s}\bar{m}_i\otimes \psi_i ([l_1,l_2],l_3)
+\sum_{i=1}^{r+s}\bar{m}_i\otimes [\psi_i(l_1,l_2),l_3]\\
&~~~+\sum_{i,j=1}^{r}\sum_{p=1}^{r+s}c_{ij}^p \bar{m}_p \otimes \psi_j(\psi_i(l_1,l_2),l_3).
\end{split}
\end{equation*}

\begin{equation*}
\begin{split}
\mbox{Similarly}~~&\{\{1\otimes l_1,1\otimes l_3 \},1\otimes l_2\}\\
=&~1\otimes [[l_1,l_3],l_2]+\sum_{i=1}^{r+s}\bar{m}_i\otimes \psi_i ([l_1,l_3],l_2)
+\sum_{i=1}^{r+s}\bar{m}_i\otimes [\psi_i(l_1,l_3),l_2]\\
&~~~+\sum_{i,j=1}^{r}\sum_{p=1}^{r+s}c_{ij}^p \bar{m}_p \otimes \psi_j(\psi_i(l_1,l_3),l_2)
\end{split}
\end{equation*}

\begin{equation*}
\begin{split}
\mbox{and}~~&\{1\otimes l_1,\{1\otimes l_2,1\otimes l_3\}\}\\
=&~1\otimes [l_1,[l_2,l_3]]+\sum_{i=1}^{r+s}\bar{m}_i\otimes \psi_i(l_1,[l_2,l_3])
+\sum_{i=1}^{r+s}\bar{m}_i\otimes [l_1,\psi_i(l_2,l_3)]\\
&~~~+ \sum_{i,j=1}^{r}\sum_{p=1}^{r+s}c_{ij}^p \bar{m}_p \otimes \psi_j(l_1,\psi_i(l_2,l_3)).
\end{split}
\end{equation*}

\begin{equation*}
\begin{split}
\mbox{Therefore}~~&(\bar{m}'_p \otimes id)(\{1\otimes l_1,\{1\otimes l_2,1\otimes l_3\}\}-\{\{1\otimes l_1,1\otimes l_2 \},1\otimes l_3\}\\
&~~~+\{\{1\otimes l_1,1\otimes l_3 \},1\otimes l_2\} )\\
=&~\delta \psi_p (l_1,l_2,l_3)+\frac{1}{2}\sum_{i,j=1}^{r}c_{ij}^p [\psi_j,\psi_i](l_1,l_2,l_3)\\
=&~\delta \psi_p (l_1,l_2,l_3)+\frac{1}{2}\nu\circ(\alpha \otimes \alpha)\circ \Delta (\bar{m}'_{p})(l_1,l_2,l_3).
\end{split}
\end{equation*}
Taking $b=2\omega_k$ and $a=id|_\mathbb{H}$ in Definition \ref{definition of Massey Bracket} the result follows.
\end{proof}

\section{Computations for the Leibniz algebra $\lambda_6$}\label{computation}
To construct a versal deformation of $\lambda_6$, we need to compute the second and third cohomology space of  $\lambda_6=L$. First consider $HL^2(L;L)$.
Our computation consists of the following steps:\\
 (i) To determine a basis of the space of cocycles $ZL^2(L;L)$,\\
 (ii) to find out a basis of the coboundary space $BL^2(L;L)$,\\
 (iii) to determine the quotient space  $HL^2(L;L)$.\\
(i) Let $\psi$ $\in$ $ZL^2(L;L)$. Then $\psi :L\otimes L\longrightarrow L$ is a linear map and  $\delta \psi =0$, where  
 \begin{equation*}
 \begin{split}
 \delta \psi(e_i, e_j, e_k)
 &=[e_i,\psi(e_j, e_k)]+[\psi (e_i, e_k), e_j]-[\psi(e_i, e_j), e_k] -\psi([e_i, e_j], e_k) \\
&~ +\psi(e_i,[e_j,e_k])+\psi([e_i, e_k], e_j) ~\mbox{for}~0\leq i,j,k \leq 3.
\end{split}   
\end{equation*}
Suppose  $\psi(e_i,e_j)=\sum_{k=1} ^{3} a_{i,j}^{k} e_k$ where $a_{i,j}^{k} \in \mathbb C$
  ; for $1\leq i,j,k\leq3$.
Since $\delta \psi =0$ equating the coefficients  of $e_1, e_2
~\mbox{and}~ e_3 $ in $\delta \psi(e_i, e_j, e_k)$ we get the following relations:
\begin{equation*}
\begin{split}
&(i)~ a_{1,1}^1 =a_{1,1}^3=0 ;\\
&(ii)~ a_{1,2}^1=a_{1,2}^3=0 ;\\
&(iii)~ a_{2,1}^1 =a_{2,1}^2=a_{2,1}^3=0 ;\\
&(iv)~a_{2,2}^1=a_{2,2}^2=a_{2,2}^3=0 ;\\
&(v)~a_{3,1}^2=a_{3,1}^3=0 ;\\
&(vi)~ a_{3,2}^2=a_{3,2}^3=0;\\
&(vii)~ a_{2,3}^3=0 ;\\
&(viii)~a_{1,1}^2=a_{3,1}^1=-a_{3,3}^3;\\
&(ix)~a_{1,2}^2=-a_{1,3}^3=a_{3,2}^1 .
\end{split}
\end{equation*}
Observe that there
is no relation among $a_{1,3}^1$,$a_{1,3}^2$, $a_{2,3}^1$, $a_{2,3}^2$, $a_{3,3}^1 $  and
$a_{3,3}^2$. Therefore, in terms of the ordered basis $\{e_1\otimes e_1, e_1\otimes e_2, e_1\otimes e_3, e_2\otimes e_1, e_2\otimes e_2, e_2\otimes e_3, e_3\otimes e_1, e_3\otimes e_2, e_3\otimes e_3\}$ of $L\otimes L$ and $\{e_1, e_2, e_3\}$ of $L$, the matrix corresponding to $\psi $ is of the form
$$M= \left( \begin{array}{llrllllll}
0&   0  &  x_3  &0 &0 &x_5 &x_1  &x_2 &x_7 \\
x_1& x_2&  x_4  &0 &0 &x_6 &0    &0   &x_8    \\
0&   0  & -x_2  &0 &0 &  0 &0    &0   &-x_1
\end{array}  \right)$$
where $$x_1=a_{1,1}^2 ; x_2=a_{1,2}^2 ; x_3=a_{1,3}^1; x_4=a_{1,3}^2;
x_5=a_{2,3}^1; x_6=a_{2,3}^2; x_7=a_{3,3}^1; x_8=a_{3,3}^2$$ are in $\mathbb C$~.
Let $\phi_i \in ZL^2(L;L)$ for $1 \leq i\leq 8$, be the cocycle with 
$x_i=1$ and $x_j=0$ for $i\neq j$ in the above matrix of $\psi$. It is easy to check that $\{\phi_1,\cdots, \phi_8\}$ forms a basis of $ZL^2(L;L)$.

(ii) 
Let $ \psi_0 \in BL^2(L;L)$. We have $\psi_0=\delta g$ for some $1$-cochain $g \in CL^1(L;L)=Hom(L;L)$. Suppose the matrix associated to $\psi_0$ is same as the above matrix $M$.

Let  $g(e_i)=g_i ^1 e_1 +g_i ^2 e_2+g_i ^3 e_3$ for $i=1,2,3$. 
The matrix associated to $g$ is given by 
\begin{center} $\left(\begin{array}{lll}
g_1 ^1& g_2 ^1  & g_3 ^1\\
g_1 ^2& g_2 ^2  & g_3 ^2\\
g_1 ^3& g_2 ^3  & g_3 ^3
\end{array} \right).$ \end{center}
 From the definition of coboundary we get $$\delta g(e_i,e_j)=[e_i,g(e_j)]+[g (e_i),e_j]-g([e_i,e_j])$$ for $0\leq i,j \leq 3$. The matrix $\delta g$ can be written as 
 
 \begin{center}$
  \left( \begin{array}{rrrrrrrll}
0        & 0       & (g_{1}^3-g_{2}^1)             &0 &0 &   g_{2}^3  &g_{1}^3&g_{2}^3 &(2g_{3}^3-g_{1}^1) \\
g_{1}^3  & g_{2}^3 &  (g_{3}^3 +g_{1}^1-g_{2}^2)   &0 &0 &   g_{2}^1  &0      &0       &(g_{3}^1-g_{1}^2)    \\
0        & 0       &   -g_{2}^3                     &0 &0 &    0       &0      &0       &-g_{1}^3
\end{array}  \right).$ \end{center}

Since $\psi_0=\delta g$ is also a cocycle in $CL^2(L;L)$, comparing matrices $\delta g$ and $M$ we conclude that the matrix of $\psi_0$  is of the form 
\begin{center}$
  \left( \begin{array}{rrrrrrrll}
0    & 0   &  x_3 &0 &0 &x_2       &x_1  &x_2&x_7 \\
x_1  & x_2 & x_4  &0 &0 &(x_1-x_3) &0  &0  &x_8    \\
0    & 0   & -x_2 &0 &0 &  0       &0 &0  &-x_1
\end{array}  \right).$ \end{center}
Let ${\phi_i}^\prime \in BL^2(L;L)~\mbox{for}~i=1,2,3,4, 7,8$ be the coboundary with $x_i=1$ and $x_j=0$ for $i\neq j$ in the above matrix of
$\psi_0$. It follows that $\{\phi_1^\prime,\phi_2^\prime,\phi_3^\prime,\phi_4^\prime,\phi_7^\prime,\phi_8^\prime\}$ forms a basis of the coboundary space $BL^2(L;L)$.

(iii)
It is  straightforward to check that  $[\phi_2]$ and $[\phi_3]$ span $HL^2(L;L)$ where $[\phi_i]$ denotes the cohomology class represented by the cocycle $\phi_i$. 

Thus  $dim(HL^2(L;L))=2$.

Next let us consider $HL^3(L;L)$.
If $\psi \in ZL^3(L;L)$, then a computation similar to $2$-cocycles shows that the transpose of the matrix of $\psi$ is 
\[ \left( \begin{array}{rrr}
0                 &x_1                           &   0                              \\
0                 &x_2                           &  0 \\
x_3               &x_4                           & (x_2+x_5)                     \\
0                 &x_5                           &  0                            \\
0                 &0                             &0                              \\
x_6               &x_{17}                       &0                               \\
x_7               &x_8                          & -x_5                            \\
\frac{1}{5}(2x_2-3x_{6}+2x_{11})      &(x_{13} - x_{10}+2x_7+x_3-2x_{1})   & 0     \\
(2x_{16}-x_{14})  &x_9                           &x_1                              \\
0                 &0                            &0                                \\
0                 &0                            &0                                 \\
\frac{1}{5}(3x_{2}+3x_6-2x_{11})-x_5      &x_{10}                       &0       \\
0                &0                              &0 \\
0                &0                              &0 \\
0                 &x_{11}                           &0 \\
x_5              & (x_1-x_7)                      &0                             \\
0                &\frac{1}{5}(3x_{2}+3x_6-2x_{11})                    &0       \\
(x_1-x_7)         &(3x_{16}-x_{14}-x_{8})         &x_5 \\
x_1                 &0                              &0 \\
x_2                &0                              &0 \\
x_{12}              &x_{18}                          &x_{13}\\
x_5                  &0                              &0 \\
0                     &0                            &0 \\
(x_{17}-x_{13}-x_{10}+3x_7+2x_3)               &x_{19}                      &\frac{1}{5}(6x_{2}+x_{6}+x_{11}) \\
x_{14}                &x_{15}                            &-x_1 \\
(2x_{13}-2x_{1}-x_3-x_7) & (x_{14}+x_{12}-x_8-x_4)   &-x_2 \\
(x_9+x_{15})           &x_{20}                    &x_{16}\end{array} \right).\]

Let ${\tau_i} \in ZL^3(L;L)$ for $1\leq i \leq 20$ be the cocycle with  $x_i=1$ and $x_j=0$ for $i\neq j$ in the
above matrix.
Then  one can check that $\{\tau_i\}_{1\leq i\leq 20}$ forms a basis
of $ZL^3(L;L)$. So $dim(ZL^3(L;L))=20$.

On the other hand suppose $\psi \in CL^3(L;L)$ is a coboundary with $\psi=\delta g$. Let $g(e_i,e_j)=g_{i,j} ^1 e_1 +g_{i,j} ^2 e_2+g_{i,j} ^3 e_3$;  for $1 \leq i,j \leq 3$. Then the transpose of the matrix of $\psi=\delta g$ is 

$$ \left(\begin{array}{ccc}
  0                                 & g_{1,1}^{3}                                     & 0 \\
0                                   & g_{1,2}^{3}                                     & 0 \\
(g_{2,1}^{1}+g_{1,2}^{1}-g_{1,1}^{3}) &( g_{2,1}^{2}+g_{1,2}^{2}-g_{1,1}^{1}+g_{1,3}^{3}) &( g_{2,1}^{3}+g_{1,2}^{3}) \\
  0 & g_{2,1}^{3} & 0 \\
  0 & g_{2,2}^{3} & 0 \\
  (g_{2,2}^{1}-g_{1,2}^{3}) &(g_{2,2}^{2}+g_{2,3}^{3}-g_{1,2}^{1})  &g_{2,2}^{3}  \\
   (g_{1,1}^{3}-g_{2,1}^{1})& (g_{1,1}^{1}+g_{3,1}^{3}-g_{2,1}^{2}) &-g_{2,1}^{3}  \\
    (g_{1,2}^{3}-g_{2,2}^{1}) & (g_{1,2}^{1}+g_{3,2}^{3}-g_{2,2}^{2}) & -g_{2,2}^{3} \\
     g_{1,1}^{1} & (g_{3,3}^{3}+g_{1,1}^{2}) & g_{1,1}^{3} \\
     0 & 0 & 0 \\
     0 & 0 & 0 \\
    (g_{2,2}^{1}-g_{2,1}^{3})& (g_{2,2}^{2}-g_{2,1}^{1}) & g_{2,2}^{3} \\
     0 & 0 & 0 \\
     0 & 0 & 0 \\
    -g_{2,2}^{3}&-g_{2,2}^{1}  & 0 \\
    g_{2,1}^{3}  &g_{2,1}^{1}  & 0 \\
    g_{2,2}^{3} & g_{2,2}^{1} & 0 \\
    g_{2,1}^{1} &g_{2,1}^{2}  & g_{2,1}^{3} \\
    g_{1,1}^{3}& 0 & 0 \\
    g_{1,2}^{3}  & 0 & 0 \\
   (g_{1,1}^{1}+g_{3,2}^{1}-g_{3,1}^{3}+g_{1,3}^{3}) & (g_{1,1}^{2}+g_{3,2}^{2}-g_{3,1}^{1}) & (g_{1,1}^{3}+g_{3,2}^{3}) \\
    g_{2,1}^{3}& 0 & 0 \\
    g_{2,2}^{3}& 0 & 0 \\
   (g_{2,3}^{3}-g_{3,2}^{3}+g_{1,2}^{1})&(g_{1,2}^{2}-g_{3,2}^{1})  & g_{1,2}^{3} \\
   (2g_{3,1}^{3}-g_{1,1}^{1}) & (g_{3,1}^{1}-g_{1,1}^{2}) & -g_{1,1}^{3} \\
   (2g_{3,2}^{3}-g_{1,2}^{1}) & (g_{3,2}^{1}-g_{1,2}^{2}) &  -g_{1,2}^{3} \\
   (g_{3,1}^{1}+g_{3,3}^{3})  & g_{3,1}^{2} &g_{3,1}^{3}  \\
                                 \end{array}
                               \right)$$
Since $\delta \psi$ is also zero, the transpose of the matrix of $\psi$ is of the previous form as well. Thus a coboundary $\psi$ has the following transpose matrix.

\[ \left( \begin{array}{rrr}
0                 &x_1                           &   0                              \\
0                 &x_2                           &  0 \\
x_3               &x_4                           & (x_2+x_5)                     \\
0                 &x_5                           &  0                            \\
0                 &0                             &0                              \\
-(x_2+x_{11})              &x_{17}                       &0                               \\
x_7               &x_8                          & -x_5                            \\
(x_{2}+x_{11})      &(x_{13} - x_{10}+2x_7+x_3-2x_{1})   & 0     \\
(2x_{16}-x_{14})  &x_9                           &x_1                              \\
0                 &0                            &0                                \\
0                 &0                            &0                                 \\
-(x_{11}+x_{5})     &x_{10}                       &0                                   \\
0                &0                              &0 \\
0                &0                              &0 \\
0                 &x_{11}                           &0 \\
x_5              & (x_1-x_7)                      &0                  \\
0                &-x_{11}                          &0                   \\
(x_1-x_7)         &(3x_{16}-x_{14}-x_{8})         &x_5 \\
x_1                 &0                              &0 \\
x_2                &0                              &0 \\
x_{12}              &x_{18}                          &x_{13}\\
x_5                  &0                              &0 \\
0                     &0                            &0 \\
(x_{17}-x_{10}+3x_7+2x_3-x_{13})               &(x_4 +x_8-x_{12}-x_{14})                      &x_2 \\
x_{14}                &x_{15}                            &-x_1 \\
(2x_{13}-2x_{1}-x_3-x_7) & (x_{14}+x_{12}-x_8-x_4)   &-x_2 \\
(x_9+x_{15})           &x_{20}                    &x_{16}\end{array} \right).\]

 This implies that $dim(BL^3(L;L))=18$. Consequently $dim(HL^3(L;L))=2$.

Since $HL^3(L;L)$ is nontrivial, it is necessary to compute possible obstructions in order to extend an infinitesimal deformation to a higher order one. 
 
First we describe the universal infinitesimal deformation for our Leibniz algebra.
To make our computation simpler, we choose the representative cocycles $\mu_1,\mu_2$ where $\mu_1=\phi_2 - \phi^\prime_2$ and $\mu_2=\phi_3$. Let us denote a dual basis in $HL^2(L;L)^\prime$ by $\{t,s\}$. By Remark \ref{exp of inf} the universal infinitesimal deformation of $L$ can be written as 
$$[1\otimes e_i,1\otimes e_j]_{\eta_1}=1\otimes [e_i,e_j]+ t \otimes \mu_1(e_i,e_j)+s \otimes \mu_2(e_i,e_j).$$  
 with base $C_1 =\mathbb{C}~\oplus \mathbb{C}~t ~ \oplus ~ \mathbb{C}~s$.

Let us describe a simpler version of the inductive definition of Massey brackets by Retakh  \cite{R}(see \cite{F}), relevant for Leibniz algebra deformations. These $n$ th order operations are partially defined and they are well defined modulo the $(n-1)$ th order ones. The second order operation is the superbracket in the cochain complex. More precisely, if $y_1=[x_1], y_2=[x_2]$ are $2$- cohomology classes, then the second order operation $<y_1,y_2>$ is represented by the superbracket $[x_1,x_2]$. 

Suppose that $y_i \in HL^2(L;L)$, $1\leq i\leq 3$ such that $<y_i,y_j>=0$ for every $i$ and $j$. This means that for a  cocycle $x_i$ representing $y_i$ we have $[x_i,x_j]=d x_{ij}$ for some $2$- cochain $x_{ij}$. Then the third order Massey operation $<y_1,y_2,y_3>$ is defined and is represented by 
$$ [x_{12},x_3]+[x_1,x_{23}]+[x_{13},x_2].$$
The cohomology class is independent of the choice of $x_{ij}$. The higher order Massey operations are defined inductively.  

Now  we compute the Massey brackets using the above definition.
\begin{description}
\item[(i)]By definition  $<[\mu_1],[\mu_1]>$ is represented by $[\mu_1,\mu_1]=2 (\mu_1 \circ \mu_1).$\\
Now $(\mu_1 \circ \mu_1)(e_i,e_j,e_k)\\
=\mu_1(\mu_1(e_i,e_j),e_k)-\mu_1(\mu_1(e_i,e_k),e_j)-\mu_1(e_i,\mu_1(e_j,e_k))$ for $1\leq i,j,k\leq 3$.

Since $\mu_1(e_2,e_3)=-e_1$ and takes value zero on all other basis element of $L\otimes L$, it follows that $\mu_1 \circ \mu_1=0$. 
\item[(ii)] Similarly $<[\mu_1],[\mu_2]>$ is represented by $[\mu_1,\mu_2]= \mu_1\circ \mu_2 +\mu_2\circ \mu_1$.
Since $\mu_2(e_1,e_3)=e_1$ and takes value zero on all other basis element of $L\otimes L$ it follows that $<[\mu_1],[\mu_2]>=0$. 
\item[(iii)] The bracket $<[\mu_2],[\mu_2]>$ is represented by $[\mu_2,\mu_2]=2(\mu_2 \circ \mu_2)=0$. 

\end{description}
Since $\{[\mu_1],[\mu_2]\}$ form a basis for $HL^2(L;L)$, it follows that all the Massey $2$- brackets are trivial. So all the Massey $3$- brackets are defined.

From the definition of Massey $3$- bracket it follows that all the Massey $3$- brackets  $<[\mu_i],[\mu_j],[\mu_k]>$ are trivial and represented by the $0$-cocycle. By induction it follows that any $<[\mu_1],[\mu_2],\cdots,[\mu_k]>=0$ for $[\mu_i]\in HL^2(L;L)$ and moreover, they are represented by the $0$-cocycle.

By  Theorem \ref{obs at each stage} and considering the inductive definition of Massey brackets in \cite{FuL} it follows that the possible obstruction at each stage in extending $\eta_1$ to a versal deformation with base $\mathbb{C}[[t,s]]$ can be realised as the Massey brackets of $\mu_1$ and $\mu_2$. So the possible obstruction vanishes.

As there are no obstructions to extending the universal infinitesimal deformation $\eta_1$,  it means that $\eta_1$ extends to a versal deformation with base $\mathbb{C}[[t,s]]$. Moreover, observe that by our choice of $\mu_1$ and $\mu_2$ every Massey brackets is represented by the $0$- cochain, and so $\eta_1$ is itself a Leibniz bracket with base $\mathbb{C}[[t,s]]$. It follows by the construction in \cite{FMM} that $\eta_1$ is a versal deformation.

Let us write out the versal deformation we have constructed:
\begin{equation*}
\begin{split}
& [e_1,e_3]_{t,s}=e_2+e_1s,~~ [e_3,e_3]_{t,s}=e_1,~~ [e_2,e_3]_{t,s}=-e_1 t\\
&\mbox{with all the other brackets of basis elements being 0}.
\end{split}
\end{equation*}

Thus we obtain the following two nonequivalent $1$-parameter deformations for the Leibniz algebra $\lambda_6$.
\begin{equation*}
\begin{split}
&(i)~[e_1,e_3]_t=e_2,~~ [e_2,e_3]_t=-e_1 t,~~ [e_3,e_3]_t=e_1\\
&\mbox{ all the other brackets of basis elements are zero,}~
\\
&(ii)~ [e_1,e_3]_s=e_2+e_1 s,~~ [e_3,e_3]_s=e_1\\
&\mbox{ all the other brackets of basis elements are zero.}
\end{split}
\end{equation*}

{\bf \large Conclusions:}
In this paper we computed a versal deformation of a $3$- dimensional nilpotent Leibniz algebra. For computing obstructions we introduced the notion of Massey brackets and proved the relationship between Massey brackets and obstructions. It turned out that in our example there are no obstructions in extending an infinitesimal deformation to a formal base, and so the universal infinitesimal deformation itself is versal with base $\mathbb{C}[[t,s]]$. From the computation it follows that our Leibniz algebra has two nonequivalent $1$- parameter family of deformations which are both infinitesimal and formal. We gave this deformation in an explicit form.
\vspace{.5cm}

{\bf \large Acknowledgements:}
The author would like to thank Professor A. Fialowski and Professor G. Mukherjee for their useful comments.

{\bf Ashis Mandal}\\
Stat-Math Unit, 
Indian Statistical Institute, \\
203 B.T. Road,
Kolkata- 700108, 
India.\\
e-mail: ashis$\_$r@isical.ac.in

\end{document}